\newtheorem{theorem}{Theorem}
\newtheorem{proposition}{Proposition}
\newtheorem{lemma}[proposition]{Lemma}
\newtheorem{remark}[proposition]{Remark}
\newtheorem{definition}[proposition]{Definition}
\newtheorem{corollary}[proposition]{Corollary}
\newtheorem{dictionnary}[proposition]{Dictionnary}
 \def\Q{\mathbb{Q}}
\def\id{{\mathrm{Id}}}  
\def\Id{\id}
 	\def\CF{\mathcal{F}}
\def\CM{\mathcal{M}}
\newcommand{\C}{\mathbb C}
\def\sym{\mathfrak{s}\mathrm{y}\mathfrak{m}} 
\def\Sym{\mathrm{Sym}}
\def\Const{\mathrm{Const}}
\def\const{\mathfrak{const}}
\def\kbar{\overline{{k}}}
\newcommand{\ggot}{\mathfrak{g}}
\newcommand{\hgot}{\mathfrak{h}}
\begin{document}
\title{A Characterization of Reduced Forms of Linear Differential Systems} 

\author[aam]{Ainhoa Aparicio-Monforte$^\star$\footnote{$\star$ \; Supported by the
Austrian FWF grant Y464-N18}}\ead{aparicio@risc.uni-linz.ac.at}
\author[ec]{Elie Compoint}\ead{Elie.Compoint@math.univ-lille1.fr}
\author[jaw]{Jacques-Arthur Weil}\ead{Jacques-Arthur.Weil@unilim.fr}
\address[aam]{RISC, Johannes Kepler University. Altenberger Strasse 69 A-4040 Linz, Austria.}
\address[ec]{D\'epartement de math\'ematiques, Universit\'e de Lille I, 59655, Villeneuve d'Ascq Cedex, France.}
\address[jaw]{XLIM (CNRS \& Universit\'e de Limoges) - 123, avenue Albert Thomas - 87060 Limoges Cedex, France}

\begin{abstract} 
A differential system $[A]\, : \; Y'=AY$, with $A\in \mathrm{Mat}(n, \overline{k})$  is said to be in reduced form if $A\in \mathfrak{g}(\overline{k})$
where $\mathfrak{g}$ is the Lie algebra of the differential Galois group $G$ of $[A]$.\\
In this article, we give a constructive criterion for a system to be in reduced form. When $G$ is reductive and unimodular, the system $[A]$ is in reduced form if and only if all of its invariants (rational solutions of appropriate symmetric powers) have constant coefficients (instead of rational functions). When $G$ is non-reductive, we give a similar characterization via the semi-invariants of $G$. In the reductive case, we propose a decision procedure for putting the system into reduced form which, in turn, gives a constructive proof of the classical Kolchin-Kovacic reduction theorem.
\end{abstract}
\begin{keyword} Differential Galois Theory  \sep Invariant Theory  \sep Computer Algebra
\MSC[2010] 	34M03, 34M15, 34M25, 34Mxx, 20Gxx, 17B45, 17B80, 34A05, 34A26, 34A99
\end{keyword}
\maketitle

\section{Introduction}
The direct problem in  differential Galois theory is, given a linear differential system, how to compute its differential Galois group.
Though there exist several theoretical  procedures to achieve this task (Hrushovski \cite{Hr02a}, Compoint-Singer \cite{CoSi99a,PuSi03a}, van der Hoeven \cite{Ho07b}), it
is still far from being practical.
\\
If we now ask directly for the Lie algebra, not much is known. Nevertheless one can observe that if $Y'=AY$ is a differential system with
coefficients in $k=C(z)$ for example, then one can give a "supset" for
the Lie algebra $\ggot$ of the differential Galois group $G$. Indeed, Kolchin showed that $\ggot$ is
included in any Lie algebra $\hgot$ such that $A\in \hgot(k)$ (\cite{PuSi03a}, chap. 1).
If $A \in \hgot(k)$, and if we transform this system by a gauge matrix $P\in
GL_n({\bar k})$ in such a way that the new system $Y'=P[A]Y$ satisfies
$P[A] \in \tilde\hgot({\bar k})$ with $\tilde\hgot \subset \hgot$, then we obtain the 
finer majoration $\ggot \subset \tilde\hgot$.
The problem is then to find a matrix $P$ such that $P[A] \in \ggot({\bar k})$.
A result of Kolchin and Kovacic (proposition~\ref{prop: Kovacic-Kolchin} page \pageref{prop: Kovacic-Kolchin}) shows that it is always
possible to find such $P \in GL_n({\bar k})$. 
\\
When $P[A]\in \ggot({\bar k})$, we'll say that the system  is in reduced form.
But how can one recognize that a system is in reduced form?
In this paper we give an answer to that question. More precisely, we show
(theorem 1) that the system is in reduced form if and only if all its
semi-invariants can be written in a very particular form: they are all the
product of an exponential and a {\it constant coefficients vector} (instead of general rational coefficients, i.e. elements of $k$, as usual). We call them 
semi-invariants with constant coefficients
(see definition~\ref{inv-a-coeffs-csts}).
In the case where the Galois group $G$ is reductive and unimodular, this
criterion becomes (theorem \ref{thm: CWA}):
the system $Y'=AY$ is in reduced form if and only if all its invariants
have constant coefficients (instead of general rational coefficients). 
We then propose a procedure which puts a system into reduced form
(this also gives an algorithmic proof of the Kolchin-Kovacic reduction proposition~\ref{prop: Kovacic-Kolchin} page \pageref{prop: Kovacic-Kolchin}).\\
This work originates in the first author's PhD (\cite{Ap10a}) where a weaker version was presented. Other applications of reduced forms (to integrability of hamiltonian systems) are studied in \cite{ApWe11a,ApWe12a}.
\\

The paper is organized as follows.
In the first part we recall some basics on differential Galois theory.
We define, inspired by works of Wei and Norman, the Lie algebra associated to a matrix $A\in M_n(k)$ and the notion of reduced form.
In the second part,  we recall standard facts about the tensor constructions
of a differential module and the representations of the differential
Galois group associated with them.
In the third part we give and prove our main criterion to characterize when a linear differential system is in reduced
form.
The fourth part is devoted to the reductive unimodular case and the algorithmic aspects of these criteria.
We finish with examples illustrating the results of the paper.
\\

\noindent{\bf Acknowledgements}. We would like to thank P. Acosta-Humanez, E. Hubert and M.F Singer for fruitful discussions on this material. We also thank the referee for subtle and constructive comments.

\section{Reduced Forms of Linear Differential Systems}

\subsection{Differential Galois Group}
Let  $(k, \partial)$ denote a differential field of characteristic zero whose constant field $C$ is assumed to be algebraically closed.
The usual case is $k=C(x)$ with $\partial =\frac{d}{dx}$.
We consider a linear differential system $[A]\,:\, Y'=AY$ with $A\in\mathrm{Mat}(n,{k})$.
We refer to the book \cite{PuSi03a} for the differential Galois theory used here.
Let $K$ denote a Picard-Vessiot extension for $[A]$. 
Let $G:=Aut_\partial(K/k)$ be the differential Galois group of $[A]$ and $\mathfrak{g}=Lie(G)$ its Lie algebra. We also write $Lie([A])$ for the Lie algebra of the Galois group $G$ of $Y'=AY$.
The $C$-vector space of solutions of $[A]$ in $K^n$ is noted $V$. 
Remark that the selection of a fundamental solution matrix $U\in GL_n(K)$ of $[A]$ is equivalent with the choice of a basis $(Y_j)_{j=1\ldots n}$ of $V$.
\\
The differential module associated to $[A]$ is noted $\CM=(k^n,\nabla_{A})$.
The connexion $\nabla_A$ admits $A$ as its matrix in the canonical basis $(e_j)_{j=1\ldots n}$ of $k^n$ and is defined by
 $\nabla_A(Y)=\partial Y-AY$ for   $Y$ in $k^n$.
\begin{definition}
We say that two differential systems $Y'=AY$ and $Z'=BZ$, with $A,B \in \mathrm{Mat}(n,{k})$ are (gauge) equivalent (over $k$)  if there exists a linear change of 
variables (a gauge transformation) 
$Y=PZ$ with $P\in GL_n(k)$ changing $[A]$ to $[B]$, i.e.  $B=P[A]:=P^{-1}\cdot(A\cdot P-\partial(P))$.
\end{definition}
Changing system $[A]$ to an equivalent one is the same as changing basis in the differential module $\CM$.
Choosing a basis of $V$ yields a faithful representation of $G$ in $\mathrm{GL}(n,C)$. 
Hence there exists a polynomial ideal
$I\subset C[X_{1,1},\ldots ,X_{i,j},\ldots, X_{n,n},\frac1{Det}]$ (where $\frac1{Det}$ represents a solution $u$ of $u.\det(X_{i,j})=1$) such that 
$$ 
G\simeq \{M=(m_{i,j})\in \mathrm{GL}(n,C)\,:\, \forall P\in I \,,\, P(m_{i,j}) =0\}.
$$
We call  $I$ the \emph{ideal of relations of $G$}.
Similarly, we obtain a representation of $Lie(G)$ as 
$$
Lie(G):=\{N\in\mathrm{Mat}(n,C)\,:\, {\Id}+\varepsilon N \in G(C[\varepsilon])\,\text{with}\,\varepsilon \neq 0 \,\text{and}\, \varepsilon^2=0\}
$$
where ${\Id}$ is the identity matrix, and the  $C[\varepsilon]$-points of $G$ are the matrices $M_{\varepsilon}$ with coefficients in $C[\varepsilon]$ which satisfy all the 
equations of the ideal $I$.

\subsection{Definition and Existence of Reduced Forms}
Once we have fixed a representation of the differential Galois group $G$ (e.g. by choosing a basis of the solution space $V$ of $[A]$),
its Lie algebra  $\mathfrak{g}$ is a  $C$-vector space generated by matrices $\{M_1 ,\ldots , M_d \}\subset\mathrm{Mat}(n , C)$.
Another choice of basis yields a conjugate representation of   $\mathfrak{g}$  (see remarks below).
The set $\mathfrak{g}({k})$ of ${k}$-points of $\mathfrak{g}$  is 
$
\mathfrak{g}({k}):=\mathfrak{g}\otimes k = \{\, f_1 M_1 + \cdots + f_d M_d\,,\; f_i \in{k}\,\}.
$
\begin{definition}
Let $A\in \mathrm{Mat}(n ,\overline{{k}})$.
Consider the differential system  $[A]:Y' = AY$ with Galois group $G$ and its Lie algebra $\mathfrak{g}=Lie([A])$.
We say that $[A]$ is in \emph{reduced form} when $A\in\mathfrak{g}(\overline{{k}})$.
\end{definition}

The following classical result of Kolchin and Kovacic shows that any system admits a reduced form.

\begin{proposition}[Kolchin-Kovacic, \cite{PuSi03a} prop 1.31 \& cor 1.32]\label{prop: Kovacic-Kolchin}
Let ${k}$ be a $C_1$ field. 
Consider a differential system $Y'=AY$ with $A\in \mathrm{Mat}($n,{k}$)$. Assume that the differential Galois group $G$ (over ${k}$) 
is connected and let  $\mathfrak{g}$ be its Lie algebra. 
Let $H\subset GL_n(C)$ be a connected algebraic group whose Lie algebra $\mathfrak{h}$ satisfies 
	$A\in\mathfrak{h}({{k}})$. Then: 
\begin{enumerate}
\item $\mathfrak{g}\subset \mathfrak{h}$ and $G\subset H$.
\item There exists $P\in H({k})$ such that the equivalent differential system $Z'=\tilde{A}Z$, with $Y=PZ$ and $\tilde{A}=P[A]$, 
satisfies $\tilde{A}\in\mathfrak{g}({{k}})$, i.e. $\tilde{A}$ is in reduced form.
\end{enumerate}
\end{proposition}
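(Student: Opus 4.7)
The plan is to realize the Picard-Vessiot ring of $[A]$ as a sub-$G$-torsor of $H_k := H\times_C k$, and then to invoke Springer's theorem on principal homogeneous spaces over a $C_1$ field to trivialize that torsor.

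For (1), since $A\in\mathfrak{h}(k)$ and $\mathfrak{h}$ identifies with the space of left-invariant derivations on $H$, the rule $\partial X := A\cdot X$ on the generic element $X\in H(k[H_k])$ extends the derivation $\partial$ of $k$ to a derivation $D$ on the full coordinate algebra $k[H_k]$. Let $\mathfrak{m}\subset k[H_k]$ be a maximal $D$-stable ideal and set $R := k[H_k]/\mathfrak{m}$. Maximality of $\mathfrak{m}$ ensures that $R$ is a Picard-Vessiot ring for $[A]$. The induced closed immersion $Z := \mathrm{Spec}(R)\hookrightarrow H_k$ is equivariant for right translation by $H$, and the Galois group $G$, being the group of $k$-algebra automorphisms of $R$ commuting with $D$, is then realized as a closed subgroup of $H$. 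Taking tangent spaces at the identity yields $\mathfrak{g}\subset\mathfrak{h}$.

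For (2), the construction above makes $Z$ into a (right) $G$-torsor sitting inside $H_k$ over $k$. Because $k$ is $C_1$ and $G$ is connected, Springer's theorem implies that every $G$-torsor over $k$ is trivial, so $Z(k)\neq\emptyset$. Pick any $P\in Z(k)\subset H(k)$. The universal point $U\in Z(R)\subset H(R)$ is a fundamental solution matrix of $[A]$, and by the torsor structure $\tilde U := P^{-1}U$ lies in $G(R)$. Then $\tilde U$ is a fundamental solution matrix of the gauge-equivalent system $Z'=\tilde A Z$ with $\tilde A = P[A]$, and computing the logarithmic derivative gives $\tilde A = \tilde U'\tilde U^{-1}$. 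For any $g\in G(R)$, differentiation of the polynomial relations cutting out $G$ inside $\mathrm{GL}_n$ shows that $g'g^{-1}\in\mathfrak{g}(R)$ (this is the defining property of the Lie algebra as the tangent space at the identity). Applied to $\tilde U$, this yields $\tilde A\in\mathfrak{g}(R)\cap\mathrm{Mat}(n,k)=\mathfrak{g}(k)$, as required.

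The main obstacle is the opening move: showing that $A\in\mathfrak{h}(k)$ genuinely promotes to a derivation of the full coordinate algebra $k[H_k]$ compatible with the group structure, and that the quotient by a maximal differential ideal gives an honest Picard-Vessiot ring embedded in $H$. Once this setup is in place, the rest is clean: Springer's theorem supplies the rational point $P$ (where connectedness of $G$ is crucial), and the closing logarithmic-derivative argument is purely formal.
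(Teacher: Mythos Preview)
Your proposal is correct and is essentially the classical torsor argument from van der Put--Singer \cite{PuSi03a}, which is precisely the reference the paper cites for this proposition. The paper does \emph{not} give its own proof of Proposition~\ref{prop: Kovacic-Kolchin} at the point where it is stated; it is quoted as a known result.

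That said, later in Section~\ref{reduction-procedure} the paper does sketch an alternative, constructive route to the same conclusion (with a caveat on rationality). For reductive unimodular $G$, one computes generators $\phi_1,\ldots,\phi_r$ of $\Sym(n\mathcal{M})^\nabla$ via the Compoint--Singer procedure, evaluates at an ordinary point $z_0$ to obtain constant invariants $I_i=\phi_i(z_0)$, and then solves the polynomial system $\Sym^{m_i}(P\oplus\cdots\oplus P)\cdot I_i=\phi_i$ for $P$; Theorem~\ref{thm: CWA} then guarantees $P[A]\in\mathfrak{g}(\bar k)$. For general $G$ the paper indicates the same idea with a single Chevalley semi-invariant in place of the full invariant ring. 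Compared with your approach, the paper's method is explicitly algorithmic (it exhibits the equations determining $P$) but only produces $P\in GL_n(\bar k)$, leaving the descent to $k$ as a separate rationality question. Your torsor argument, invoking Springer's theorem for connected groups over a $C_1$ field, is exactly what secures $P\in H(k)$ directly and is the cleaner route to the proposition as stated.
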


\begin{corollary}
With the same notations, assume now that $G$ is \emph{not connected}. 
There exists $P\in H(\overline{{k}})$ such that the equivalent differential system  $Z'=\tilde{A}Z$, with $Y=PZ$ and $\tilde{A}=P[A]$, satisfies $\tilde{A}\in\mathfrak{g}(\kbar)$
\end{corollary}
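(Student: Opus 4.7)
The plan is to reduce to the connected case treated by Proposition~\ref{prop: Kovacic-Kolchin} by passing to a suitable finite algebraic extension of $k$, and then to recover the conclusion over $\bar k$. The key observation is that the identity component $G^0$ of $G$ has the same Lie algebra as $G$, so ``being in reduced form'' only sees $G^0$.

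First I would let $G^0$ be the connected component of the identity of $G$, so that $\mathfrak{g} = Lie(G) = Lie(G^0)$. By the fundamental correspondence of differential Galois theory, the fixed field $k' := K^{G^0}$ is a finite (ordinary) Galois extension of $k$ with Galois group $G/G^0$. Viewed as a differential system over the new base field $k'$, the same Picard--Vessiot extension $K/k'$ realizes $G^0$ as the differential Galois group of $[A]$, which is now connected. Moreover, $A \in \mathfrak{h}(k) \subset \mathfrak{h}(k')$, so the inclusion hypothesis for $H$ is preserved, as is the $C_1$ hypothesis (for the standard base $k = C(x)$, any finite extension $k'$ is the function field of an algebraic curve over the algebraically closed field $C$, hence $C_1$ by Tsen's theorem).

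Next I would apply Proposition~\ref{prop: Kovacic-Kolchin} to $[A]$ considered over $k'$: this produces a gauge matrix $P \in H(k')$ such that $\tilde A := P[A]$ lies in $\mathfrak{g}(k')$. Since $k' \subset \bar k$, we have $P \in H(\bar k)$ and $\tilde A \in \mathfrak{g}(\bar k)$, which is exactly the desired reduced form.

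The main obstacle, as I see it, is the bookkeeping around the base change. One must check (i) that $G^0$ is indeed the differential Galois group of $[A]$ over the fixed field $k'$ (this is the standard Galois correspondence, together with the observation that $k'/k$ is algebraic so that $K$ remains a Picard--Vessiot extension of $k'$ for the same system), and (ii) that the ambient Lie algebra $\mathfrak{h}$ and the connectedness/$C_1$ hypotheses needed to invoke Proposition~\ref{prop: Kovacic-Kolchin} survive the extension. Both points are standard but deserve to be spelled out. Once they are in place, the proof is essentially immediate from the connected case.
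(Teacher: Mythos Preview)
Your proposal is correct and follows essentially the same route as the paper: pass to the fixed field $k' = K^{G^0}$ (which the paper writes as $k^\circ := \bar{k}\cap K$, the same field by the Galois correspondence), observe that over this algebraic extension the Galois group becomes the connected $G^0$ with the same Lie algebra $\mathfrak{g}$, and apply Proposition~\ref{prop: Kovacic-Kolchin}. The only minor difference is that the paper invokes the general fact that any algebraic extension of a $C_1$ field is again $C_1$, whereas you justify this only in the case $k=C(x)$ via Tsen's theorem; the paper's formulation matches the generality of the hypothesis in Proposition~\ref{prop: Kovacic-Kolchin}.
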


\begin{proof} Let   $K$ be a Picard-Vessiot extension of ${k}$ for $Y' = AY$. 
By Galois correspondence (e.g.  proposition 1.34 p.26  in \cite{PuSi03a}), if we let $k^\circ:=\overline{{k}}\bigcap K$,
we have $G^{\circ}= Aut_{\partial}(K/k^\circ )$. 
As  $k^\circ$ is an algebraic extension of $ {k} $, it is still a $C_1$-field. 
Pick $k^\circ$ as a base field: then $K$ is a Picard-Vessiot extension of $k^\circ$  whose 
Galois group $G^{\circ}$ satisfies the conditions  of proposition  \ref{prop: Kovacic-Kolchin}.
\end{proof}

The assumption that  ${k}$ be a $C_1$ field is used only for the rationality issue (i.e. find a reduced form with coefficients in ${k}$ when $G$ is connected). 
If we allow a reduced form to have coefficients in an algebraic extension of ${k}$, then this hypothesis is not needed (we will come back to this and give a short proof in section  \ref{reduction-procedure}).

\subsection{Wei-Norman Decompositions and Reduced Forms} 

Let $A=(a_{i,j})\in\mathrm{Mat}(n,{k})$. We consider the $C$-vector space generated by the $n^2$ coefficients $a_{i,j}$. 
Let $a_1 ,\ldots , a_r$ be a basis of this space;  
each $a_{i,j}$ then admits a unique decomposition  $a_{i,j}=\sum^{r}_{k=1}m^{(k)}_{i,j} a_k$ with $m^{(k)}_{i,j}\in C$. 
We thus have, once the basis $(a_k)$ is chosen, a unique decomposition $A=\sum^{r}_{k=1} a_k M_k$ where $M_k =\left(m^{(k)}_{i,j}\right)_{i,j}\in\mathrm{Mat}(n,C)$

\begin{definition}
Let $A=(a_{i,j})\in\mathrm{Mat}(n,{k})$. A \emph{Wei-Norman decomposition} of $A$ is a decomposition  
$$
A=\sum^{r}_{k=1} a_k M_k,\quad M_k \in\mathrm{Mat}(n,C)
$$
where $\{a_1 , \ldots , a_r\}$ are a basis of $\mathrm{Span}_{C}((a_{i,j})_{i,j = 1,\ldots, n})$.
\end{definition}
  
Of course, this decomposition is not unique (it depends on the choice of the basis $\{a_1 , \ldots , a_r\}$) but its dimension $r$ is.
Note that the $C$-vector space generated by the $M_i$ (also of dimension $r$) depends only on $A$ (not on a choice of basis).
Indeed, consider an alternative basis $\{b_1 , \ldots , b_r\}$ of  $\mathrm{Span}_{C}((a_{i,j})_{i,j = 1,\ldots, r})$ such that 
$ a_j = \sum^{r}_{i=1} q_{ji} b_i$ with $Q:=(q_{i,j})_{i,j}\in\mathrm{GL}(r,C)$;
this gives an alternative Wei-Norman decomposition $\ A=\sum^{r}_{i=1} b_i  N_i$  with $N_i :=\sum^{r}_{j=1} q_{ji}M_j$.
It is clear that  $\mathrm{span}_{C}(M_1 , \ldots , M_r) = \mathrm{span}_{C}(N_1 , \ldots , N_r)$. 
\\

The notion of Wei-Norman decomposition borrows its name from Wei and Norman who, in \cite{WeNo63a,WeNo64a}, use the Lie algebra generated 
by the $M_i$ (presented there as the Lie algebra generated by all the $A(t)$ for $t$ spanning $\mathbb{C}$) to establish their method for solving linear differential systems.
\\

To extend their ideas to our context, we need a bit of classical terminology. The Lie algebra generated by a set of matrices $M_1,\ldots,M_r$ is the vector space generated by the $M_i$ and their iterated Lie brackets. A Lie algebra $\mathfrak{h}$ is called {\em algebraic} if there exists a connected linear algebraic group $H$ whose Lie algebra is $\mathfrak{h}$. The \emph{algebraic envelope} of a Lie algebra $\mathfrak{l}$ is the smallest algebraic Lie algebra containing $\mathfrak{l}$ ; if a Lie algebra is generated by given matrices, De Graaf and Fieker provide in \cite{FdG07} (section 3) an algorithm to compute its algebraic envelope
 
\begin{definition}\label{Lie algebra associated to the matrix A}
The \emph{Lie algebra $Lie(A)$ associated to the matrix $A\in\mathrm{Mat}(n,{k})$} is the algebraic envelope of the Lie algebra generated by the matrices $M_1,\ldots, M_r$ of a Wei-Norman decomposition of $A$.
\end{definition}
 

\begin{remark}
In definition~\ref{Lie algebra associated to the matrix A}, as $Lie(A)$ is required to be algebraic, it is the Lie algebra 
$\mathfrak{h}$ of some connected algebraic group $H$.  Hence $A\in \mathfrak{h}(k)$ and proposition~\ref{prop: Kovacic-Kolchin} applies, showing that $Lie([A])\subset Lie(A)$. The system is in reduced form when we reach the equality. 

Note that the Lie algebra generated by a Wei-Norman decomposition need not be algebraic; in such a case, it may not contain $Lie([A])$ and one really needs the algebraic envelope. 
The following example illustrates this fact. 

Consider the linear differential system of matrix $A:=\frac{1}{x}M_1$ where $M_1:=\left(\begin{array}{cc} \sqrt{2} & 0 \\ 0 & \sqrt{3}\end{array}\right)$. Then  $U:=\left(\begin{array}{cc} x^{\sqrt{2}}& 0 \\ 0 & x^{\sqrt{3}}\end{array}\right)$ is a fundamental matrix for $A$. 
Since $x^{\sqrt{2}}$ and $x^{\sqrt{3}}$ are algebraically independent, the Galois group $G$ of $A$ is $\C^{\star}\times \C^{\star}$, which has dimension $2$.

 The Lie algebra $Lie([A])$ of $G$ is generated by $\left\lbrace\left(\begin{array}{cc}1 & 0 \\ 0 & 0\end{array}\right)\,,\, \left(\begin{array}{cc} 0 & 0 \\ 0 & 1 \end{array}\right)\right\rbrace$ and has dimension $2$ as well. It is clear that $\{M_1\}$ yields a Wei-Norman decomposition of $A$, that $\mathrm{span}(M_1)$ has dimension $1$ and that  $Lie([A])\nsubseteq \mathrm{span}(M_1)$.
The reason why $Lie([A])$ is not included in $\mathrm{span}(M_1)$ is the following. The monodromy group of $[A]$ is generated by $\exp(2 \mathbf{i} \pi M_1)$ and  is not an algebraic group. What is algebraic is its Zariski closure which is $\left\lbrace\left( \begin{array}{cc} c_1 & 0 \\ 0 & c_2 \end{array}\right)\quad:\quad c_i \in \C^{\star}\right\rbrace$ and has dimension $2$. 
Note that $\mathrm{span}(M_1)$ is not an \emph{algebraic} Lie algebra.
\end{remark}

\begin{definition}\label{reduction-matrix}
$Y'=AY$ with $A \in Mat(n,k)$. A matrix $P\in GL_n(\bar{{k}})$ is called a {\em reduction matrix for $[A]$} if  $P[A]$ is in reduced form.
It is called a {\em partial reduction matrix for $[A]$} if $Lie(P[A])\subsetneq Lie(A)$.
\end{definition} 

Note that a reduction matrix $P$ may have coefficients in $\overline{{k}}\cap K$. It follows that a reduced form $R:=P[A]$
may have algebraic coefficients and $R\in\mathfrak{g}(\overline{{k}})$. As the change of variables $Y=PZ$ has algebraic coefficients, 
the Galois group may change but not its connected component of the identity, nor its Lie algebra.

\begin{remark}
Consider a system   $Y'=BY$ in reduced form. 
Then there exist elements $h_i\in\mathfrak{g}$, a basis $\{y_j\}$ of $V$ and a Wei-Norman decomposition  $B=\sum^{r}_{i=1} b_i  B_i$ of $B$ such that $B_i$ is the matrix of $h_i$ in the basis  $\{y_i\}$. 
Let $\{\tilde{y}_i\}$ be another basis of $V$, and let $Q \in
\mathrm{GL}(n,C)$ be the matrix of the change of basis from  $\{y_i\}$ to $\{\tilde{y}_j\}$. 
Then there exists a reduced form   $Z'=AZ$ in which the matrices $A_i$ of a Wei-Norman decomposition of  $A$ appear as matrices in the basis $\{\tilde{y}_i\}$
of elements $h_i$ of $\mathfrak{g}$. To see this, consider the matrix  $A=Q^{-1}BQ$. For each $i$,
$A_i=Q^{-1}B_i Q$ is the matrix of the element $h_i$ of $\mathfrak{g}$ in the basis $\{\tilde{y}_i\}$.
\end{remark}

Hence, having a reduced form  $Y'=AY$ associated with some \emph{a priori} given basis  $\{y_i\}$ of $V$ (i.e. $A=\sum^{r}_{i=1} f_i \cdot A_i$ 
where the  $A_i$ are elements  of $\mathfrak{g}$ represented in the basis $\{y_i\}$), we may obtain by conjugation a reduced form 
$Z'=RZ$ associated with any choice $\{\tilde{y}_i\}$ of basis of $V$.
In the sequel, we will 
apply this result  to solutions $\{\tilde{y}_i\}$ from a (local) fundamental solution matrix  $\hat{U}$ determined by the initial condition  $\hat{U}(z_0)={\Id}$ at some ordinary point $z_0\in C$ for $A$ (see section \ref{evaluation-morphism}).

We now prove a simple lemma useful in the proof of our criterion for reduced forms.
\begin{lemma}\label{crucial_for_criterion1}
Let $A=\sum_i f_iA_i$ be a Wei-Norman decomposition of $A$. 
Assume that $A$ admits a constant eigenvector $w\in C^n$  associated with an eigenvalue $f\in{k}$. Then $w$ is an eigenvector of each
$A_i$.
\end{lemma}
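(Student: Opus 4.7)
The plan is to compute $Aw$ using the Wei-Norman decomposition and then exploit the $C$-linear independence of the $f_i$.

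First, I would observe that since each $A_i \in \mathrm{Mat}(n,C)$ and $w\in C^n$, the products $A_i w$ all lie in $C^n$. Writing the eigenvalue equation $Aw=fw$ with the decomposition gives
\[
\sum_{i=1}^{r} f_i (A_i w) \;=\; f\, w,
\]
an identity in $k^n$ with coefficient vectors $A_iw \in C^n$ on the left.

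Next, I would show that $f$ itself lies in $\mathrm{Span}_C(f_1,\dots,f_r)$. Indeed, since $w$ has constant entries, each component of $Aw$ is a $C$-linear combination of the entries of $A$, hence lies in $\mathrm{Span}_C(f_1,\dots,f_r)$. On the other hand, the components of $Aw=fw$ are the scalars $f\, w_j$; picking an index $j$ with $w_j\neq 0$ (which exists because $w$ is an eigenvector, hence nonzero), we get $f = (f w_j)/w_j \in \mathrm{Span}_C(f_1,\dots,f_r)$. Write $f=\sum_{i=1}^{r}\lambda_i f_i$ with $\lambda_i\in C$.

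Substituting this into the eigenvalue identity yields
\[
\sum_{i=1}^{r} f_i\,(A_i w - \lambda_i w) \;=\; 0,
\]
where every vector $A_iw-\lambda_i w$ lies in $C^n$. Since $\{f_1,\dots,f_r\}$ is by hypothesis a $C$-basis of its span, the coefficients in $C^n$ must all vanish, giving $A_i w = \lambda_i w$ for each $i$. Thus $w$ is a common eigenvector of all the $A_i$, with eigenvalues $\lambda_i\in C$ satisfying $f=\sum_i\lambda_i f_i$.

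There is no real obstacle here; the only subtlety is justifying that the eigenvalue $f\in k$ actually lies in the $C$-span of the chosen basis $(f_i)$, which is forced by comparing entries of $Aw$ on both sides. Once this is in hand, the $C$-linear independence of the $f_i$ over $k$ (in the trivial sense that they form a basis of a $C$-subspace of $k$) closes the argument immediately.
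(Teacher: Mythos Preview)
Your proof is correct and follows essentially the same approach as the paper's: pick a nonzero coordinate of $w$ to deduce that $f$ lies in $\mathrm{Span}_C(f_1,\dots,f_r)$, write $f=\sum_i \lambda_i f_i$, and then use the $C$-linear independence of the $f_i$ to force $A_iw=\lambda_i w$ for each $i$.
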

\begin{proof}
Let $w=\, {}^t(c_1,\ldots,c_n)\neq 0$. Assume, for example, that $c_1\neq 0$. Relation $Aw=f w$ gives us 
$f c_1 = \sum \alpha_i f_i$ with $\alpha_i\in C$, i.e. there exist constants $a_i$ such that $f=\sum a_i f_i$.
As $A w=f w$, we have $\sum_i f_i (A_i w) = \sum_i f_i (a_i w)$. As the $f_i$ are linearly independent over $C$, 
it follows that $A_i w = a_i w$ for all $i$ as claimed.
\end{proof} 


\section{Tensor Constructions on $V$ and on $\CM$}

Before turning to general tensor constructions, we start with the (simpler) case of the action of $G$ on symmetric powers of $V$ 
to ease the exposition.
The material in this section is mostly known to specialists but is here to set notations and clarify what follows.

\subsection{Actions of $G$ and $Lie(G)$ on $C[X_1 , \ldots , X_n]$}\label{subsec:actions}

\begin{definition}
Let $V$ denote a $C$-vector space of dimension $n$ and let $G\subset GL(V)$ be a linear algebraic group.
Let $(Y_i)$ be a basis of $V$. Let $\sigma\in G$ be an automorphism of $V$ and $M=(m_{i,j})\in{GL}(n,C)$ its matrix in the basis 
$(Y_i)$.
One defines an action (right translation) of $\sigma$ on $C[X_1, \ldots , X_n]$ by
$$
\sigma (X_j) = \sum^{n}_{i=1} m_{i,j}\cdot X_i .
$$
Given a homogeneous polynomial $P\in C[X_1 , \ldots , X_n]_{=m}$  of degree $m$, the action of  $\sigma$ on $P$ is defined by
$$
\sigma(P)(X_1 , \ldots ,X_n) := P(\sigma(X_1),\ldots , \sigma(X_n)).
$$
\end{definition}

We identify $P$ with its (column) vector  $v_P$ of coefficients in the symmetric power $\mathrm{Sym}^m (V)$.
Naturally, there exists a matrix, denoted by $\mathrm{Sym}^m (M)$, such that
$$
v_{\sigma(P)} =  \mathrm{Sym}^m (M)\cdot v_P .
$$
This way, the constructor $\mathrm{Sym}^m$ is a group morphism (from $GL(V)$ to $GL(Sym^m(V))$, called the 
\emph{$m$-th symmetric power in the sense of Lie groups}.
\begin{definition}\label{definition-sym}
In the above notations, let again $\mathfrak{g}=Lie(G)$.  To a matrix 
	$N:=(n_{i,j})\in\mathfrak{g}\subset \mathrm{Mat}(n,C)$, we associate the derivation
$$
D_N := \sum^{n}_{j=1}\left(\sum^{n}_{i=1} n_{i,j} X_i\right) \frac{\partial}{\partial X_j}.
$$
The action of $N\in\mathfrak{g}$ on elements  $C[X_1, \ldots , X_n]$ is defined as the action of the derivation  $D_N$ on the polynomials $P\in C[X_1, \ldots , X_n]$. 
\end{definition}
This may be understood using the $\varepsilon$-formalism. Let $\varepsilon$ be a variable subject to $\varepsilon^2=0$; 
we know (\cite{PuSi03a}, chap. 1,  or \cite{MiSi02a}, part 5) that $N\in \mathfrak{g}$ if and only if ${\Id} + \varepsilon N$ satisfies the group equations (modulo $\varepsilon^2$). If we set $\sigma_\varepsilon :={\Id}+\varepsilon N$, the above action (and the Taylor formula) induce 
	$\sigma_{\varepsilon} (P)=P+\varepsilon D_N (P)$, hence this definition.
As above, this defines a matrix $\sym^{m}(N)$ such that
$$
v_{D_N (P)}=\sym^{m}(N)\cdot v_P.
$$
We say that $\sym^{m}(N)$ is the \emph{$m$-th symmetric power of $N$ in the sense of Lie algebras}.
\begin{remark} The calculation following definition~\ref{definition-sym} shows that, for $B\in\mathrm{Mat}(n,{k})$,
an alternative characterization of  $\sym^{m}(B)$ is
	$$ \mathrm{Sym}^{m}({\Id} + \varepsilon B ) = {\Id}_N + \varepsilon\; \sym^{m}(B).$$
\end{remark}	
Similarly, we consider a linear differential system $[A]\,:\, Y'=A\cdot Y$ with $A\in\mathrm{Mat}(n,{k})$ and let $U$ 
be a fundamental solution matrix; 
using again a variable $\varepsilon$ satisfying $\varepsilon^2=0$ and using section \ref{evaluation-morphism} below to define evaluation of $U$ (if needed),  we have $U(x+\varepsilon)=U(x)+\varepsilon U'(x)=({\Id}+\varepsilon A)U(x)$ so 
$\Sym^m(U)(x+\varepsilon)=({\Id}+\varepsilon \; \sym^m(A))\Sym^m(U)(x)$. It follows that
$$
\mathrm{Sym}^{m}(U)' = \sym^{m}(A)\cdot \mathrm{Sym}^{m} (U).  
$$
\begin{lemma}
Consider the differential system $[A]\, :\, Y'=AY$ with $A\in\mathrm{Mat}(n,{k})$ 
and the change of variable $Y=PZ$ with $P\in{GL}(n,{k})$. Let $B=P[A]:=P^{-1}\cdot (A\cdot P - P')$.
Then, 
we have	$$\sym^m(P[A]) = \sym^m(B) = \mathrm{Sym}^m(P)[\sym^m(A)].$$ 
\end{lemma}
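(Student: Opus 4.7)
The plan is to lift the identity to a statement about fundamental solution matrices, where the multiplicativity of $\Sym^m$ as a group morphism $GL_n \to GL(\Sym^m V)$ does all the work. Fix a fundamental solution matrix $U\in GL_n(K)$ of $Y'=AY$. The relation $\Sym^m(U)'=\sym^m(A)\cdot\Sym^m(U)$ established just above (via the $\varepsilon$-formalism) says that $\Sym^m(U)$ is a fundamental solution matrix of $Y'=\sym^m(A)\,Y$.

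Now I apply the same observation to $B=P[A]$. The change of variables $Y=PZ$ turns $[A]$ into $[B]$, so $V:=P^{-1}U$ is a fundamental solution matrix of $Z'=BZ$, and consequently $\Sym^m(V)$ is a fundamental solution matrix of $Y'=\sym^m(B)\,Y$. On the other hand, because $\Sym^m$ is a group morphism on $GL_n$ (and therefore sends products to products and inverses to inverses), one has $\Sym^m(V)=\Sym^m(P)^{-1}\,\Sym^m(U)$. Equivalently, the gauge transformation with matrix $\Sym^m(P)$ sends the fundamental matrix $\Sym^m(U)$ of $Y'=\sym^m(A)\,Y$ to $\Sym^m(V)$, so $\Sym^m(V)$ is also a fundamental solution matrix of the system $Y'=\Sym^m(P)[\sym^m(A)]\,Y$.

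Two linear systems that share an invertible fundamental solution matrix must have the same coefficient matrix, which yields $\sym^m(B)=\Sym^m(P)[\sym^m(A)]$. I do not anticipate a real obstacle: the only ingredients are the Lie group / Lie algebra compatibility $\Sym^m(\Id+\varepsilon A)=\Id+\varepsilon\,\sym^m(A)$ (recalled just above the statement) and the multiplicativity of $\Sym^m$ on $GL_n$, both of which are built into the construction. If one preferred to avoid invoking a Picard--Vessiot extension, the same identity could be obtained by a direct computation using $\Sym^m(P)'=\Sym^m(P)\cdot\sym^m(P^{-1}P')$ (itself a consequence of differentiating $\Sym^m(PQ)=\Sym^m(P)\Sym^m(Q)$) and expanding $\Sym^m(P)[\sym^m(A)]$ term by term, but the route through fundamental matrices is shorter and more transparent.
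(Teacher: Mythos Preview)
Your proof is correct and follows essentially the same route as the paper: both arguments pass to fundamental solution matrices $U$ of $[A]$ and $V=P^{-1}U$ of $[B]$, use that $\Sym^m$ is a group morphism to get $\Sym^m(U)=\Sym^m(P)\cdot\Sym^m(V)$, and read off the gauge identity from there. Your write-up is simply a more detailed version of the paper's terse three-line proof.
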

\begin{proof} Let $U$ and $V$ be fundamental solution matrices of  $[A]$ and $[B]$ respectively.  As $U=P\cdot V$ and 
$\mathrm{Sym}^m$ is a group morphism, we have $\mathrm{Sym}^m(U)=\mathrm{Sym}^m(P)\cdot\mathrm{Sym}^m(V)$
hence the result.
\end{proof}

\subsection{Tensor Constructions and Differential Modules}\label{sub : consdiff}
As above, $V$ denotes the solution space of $Y'=AY$  in a Picard-Vessiot extension $K$ of $k$ with the differential Galois group $G$ acting on $V$. 

\begin{definition} \label{tensor-construction}
A \emph{tensor construction} $\Const(V)$ on the $G$-module $V$ is a vector space obtained from $V$ by finite iteration of $\otimes$, $\oplus$, $\star$ (dual),  symmetric powers $\Sym^m$ and exterior powers $\Lambda^r$.
\end{definition}
\begin{remark}\label{chevalley}
In representation theory, one also considers subspaces and quotients. Here, these "direct" tensor constructions ($GL_n$-modules) will be enough for our purpose.
The proofs of the main results of this paper rely on Chevalley's theorem (\cite{Hu75a} thm 11.2, \cite{Bo91a} thm 5.1, \cite{Sp98a} thm 5.5.3) which states that there exists a tensor construction $\Const(V)$ and a line $D\subset \Const(V)$ such that $G$ is exactly the stabilizer of $D$ in $GL(V)$; the fact that the tensor constructions used here are enough can be seen by inspecting the proofs of Chevalley's theorem; for example in \cite{Be86a} page 67, it is shown that there is a finite dimensional  $W \subset C[End(V)] \simeq Sym(V\otimes V^\star)$ such that $D=\Lambda^r(W)$ (and hence lies in a tensor construction).
\end{remark}

The way $G$ acts on tensor constructions can be found e.g. in \cite{Hu75a, FuHa91a} or chapter 2 of  \cite{PuSi03a}; in particular, given a basis $Y=(Y_i)_{i=1\cdots n}$ of $V$ 
and a a tensor construction $\Const(V)$ on $V$, one may construct a canonical basis $\Const(Y)$ of $\Const(V)$.
 
Given $g\in G$ with matrix $M$, the matrix
$\Const(M)$ is defined (as in the above subsection) as the matrix of the action of $g$ on $\Const(V)$ relatively to this basis.
The constructor $\Const$ is then clearly a group morphism from $G$ to $GL_N(C)$ (with $N=\dim(\Const(V))$).
The associated "Lie algebra" constructor $\const$ may be (as above) defined by the identity
$\Const({\Id}+\varepsilon N)=\Id_N + \varepsilon\; \const(N)$, i.e. by the way a derivation $D_N$ (in the above notations of section \ref{subsec:actions}) acts on a construction.
This makes $\const$ a vector space morphism. 

\begin{lemma}\label{crucial_for_criterion2}
Let $A=\sum_i f_iA_i$ be a Wei-Norman decomposition of $A$. 
Let $\Const(\bullet)$ be a tensor construction. Then $\const(A)=\sum_i f_i \const(A_i)$ is a Wei-Norman decomposition of $\const(A)$.
\end{lemma}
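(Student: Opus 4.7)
The plan is to invoke $C$-linearity of the constructor $\const$ and extend by scalars to $k$, then check the Wei-Norman conditions directly.

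First, I would establish that $\const \colon \mathrm{Mat}(n,C)\to \mathrm{Mat}(N,C)$ is $C$-linear. This follows from the defining identity $\Const(\Id+\varepsilon N)=\Id_N+\varepsilon\,\const(N)$ combined with the derivation picture of Section~\ref{subsec:actions}: $\const(N)$ represents the derivation $D_N=\sum_{j}\bigl(\sum_i n_{i,j}X_i\bigr)\partial_{X_j}$ on the canonical basis of $\Const(V)$, and the assignment $N\mapsto D_N$ is visibly $C$-linear. This linearity then propagates through the basic operations $\otimes$, $\oplus$, $\star$, $\Sym^m$, $\Lambda^r$ used to build $\Const$, by a short induction on the syntactic structure of the construction (using the Leibniz rule on tensor products and the standard formulas on duals, symmetric and exterior powers).

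Next, I extend scalars from $C$ to $k$. Viewing $\const$ as a $k$-linear map $\mathrm{Mat}(n,k)\to \mathrm{Mat}(N,k)$ obtained by tensoring the $C$-linear map above with $k$, one gets $\const\bigl(\sum_i f_i A_i\bigr)=\sum_i f_i\,\const(A_i)$. Each $\const(A_i)$ lies in $\mathrm{Mat}(N,C)$ because $\const$ preserves constant matrices, and the family $\{f_1,\ldots,f_r\}$ is $C$-linearly independent by hypothesis on the original Wei-Norman decomposition of $A$. Consequently the entries of $\const(A)$ are $C$-linear combinations of the $f_i$ with coefficients read off from the entries of the $\const(A_i)$, which certifies that $\sum_i f_i\,\const(A_i)$ is a Wei-Norman decomposition of $\const(A)$ (discarding any index $i$ with $\const(A_i)=0$ if one wishes to match strictly the minimality built into the definition, in which case the dimension drops accordingly).

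The only step requiring care is the $C$-linearity of $\const$; beyond this routine bookkeeping on the tensor operations, no serious obstacle is anticipated.
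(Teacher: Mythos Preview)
Your proposal is correct and follows the same approach as the paper, which simply records in one line that the result ``follows from linear independence (over $C$) of the $f_i$ and the fact that $\const$ acts linearly.'' Your write-up merely expands on why $\const$ is $C$-linear and notes the minor bookkeeping about possibly redundant terms, which the paper leaves implicit.
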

\begin{proof}
Follows from linear independence (over $C$) of the $f_i$ and the fact that $\const$ acts linearly.
\end{proof}

Let now $\mathcal{M}:=({k}^n \,,\, \nabla_A = \partial -A)$ be the differential module. As shown in chapter 2 of  \cite{PuSi03a},
the matrix of the action of $\nabla_A$ on $\mathrm{Const}(\mathcal{M})$ is $\mathfrak{const}(A)$. In particular, in terms of a fundamental solution matrix $U\in{GL}(n,K)$, we have
 
$$
\mathrm{Const}(U)' = \mathfrak{const}(A)\cdot \mathrm{Const}(U).
$$
 
\subsection{A good fundamental solution matrix and its evaluation}\label{evaluation-morphism}

In the rest of this paper, we'll choose a convenient fundamental matrix $\hat{U}$. 
If $k=C(z)$ then we choose some ordinary point $z_0\in C$ 
of $[A]$ and let $\hat{U}$ denote the fundamental matrix of $[A]$ in $GL_n(C[[z-z_0]])$ such that $\hat{U}(z_0)={\Id}$. 

For a more general differential field, we can do more or less the same (we follow here the treatment in \cite{Br01a}, section 3). 
Let $k$ denote a finitely generated differential extension of $\bar{\Q}$. 
The $n^2$ coefficients of $A$ then lie in  a differential field $\tilde{k}$ which is a finitely generated differential extension of $\Q$.
By Seidenberg's Embedding Theorem \cite{Se58a,Se69a}, any such field
is isomorphic to a differential field $\CF$ of meromorphic functions on an open region
of $\C$.

So there exist infinitely many $z_0$ in $C$ such that all coefficients of $A$ can be seen to be analytic in an open neighborhood of $z_0$
(containing $z_0$). Such a $z_0$ will be called an ordinary point of $[A]$. We may then apply Cauchy's theorem to construct a local fundamental solution matrix $\hat{U}\in C[[z-z_0]]$ normalized by $\hat{U}(z_0)={\Id}$. 

Once  $\hat{U}$ is chosen, we first note that $\det(\hat{U})(z_0)=1$. 
Next  we recall that, for a tensor construction $\Const$, the entries of $\Const(\hat{U})$ are polynomials (with coefficients in $C$) in the entries of  $\hat{U}$ and $1/\det(\hat{U})$.    
As $\det(\hat{U})(z_0)=1$, the inverse of $\det(\hat{U})$ lies in $C[[z-z_0]]$ so that 
 the entries of $\Const(\hat{U})$ are polynomials in elements of $C[[z-z_0]]$.
As a consequence, the constructor $\Const$ commutes with evaluation at $z_0$ and, as $\Const$ is a group morphism from $GL_n(C)$ to $GL_N(C)$, $$\Const(\hat{U})(z_0)=\Const(\hat{U}(z_0))=\Const({\Id}_n)={\Id}_N.$$

In the rest of the paper, the reader may thus think of $k$ as being think of $k$ as being a subfield of $\C(\{x\})$ and then use this embedding to obtain the results for general $k$.
\begin{remark} \label{apparent-singularities}
Lemma \ref{crucial_for_criterion2} shows that, when one performs tensor constructions (in the sense of definition~\ref{tensor-construction}) on a differential system $[A]$, no apparent singularity may appear (this is not true if one performs constructions on operators). Indeed, the coefficients of $\const(A)$ will be linear combinations of the coefficients of $A$. Hence, if we choose an ordinary point $z_0$ of $[A]$, it remains an ordinary point of any tensor constructions $\const(A)$.
\end{remark}

\subsection{Invariants and Semi-Invariants of $G$ and of $\CM$}\label{invariants}

\begin{definition}
Let $\Const(V)$ denote a tensor construction on $V$. 
An element $I\in \Const(V)$ is called a \emph{semi-invariant} of $G$ if, for all  $g \in G$, 
$g(I)=\chi_{g}I$ where $\chi: G \rightarrow C^{*}$ is a character of $G$.
\\
We say that $I$ is an \emph{invariant} of $G$ if, for all  $g \in G$,  $g (I)=I$.
\end{definition}

For $f$ in $k$, we write $\exp(\int f)$ for a solution of $y'=fy$. A solution of $Y'=AY$ is called rational if $Y\in k^n$; it is called exponential if there exist $f\in k$ and $F\in k^n$ such that $Y=\exp(\int f)\cdot F$.

\begin{definition}\label{invariant-module}
A rational (resp. exponential) solution of some construction $Y'=\const(A)Y$ will be called an {\em invariant} (resp. {\em semi-invariant}\footnote{If $\phi_I=\exp(\int f)\; {}^t(\lambda_1,\cdots , \lambda_m)$ with $\lambda_i \in k$, then it is the vector $^t(\lambda_1,\cdots , \lambda_m)$ 
which lies in the module and could be called a semi-invariant of the module, so our notation is a bit abusive but coherent with the dictionary below.}) {\em of the differential module $\CM$}.
\end{definition}

As in section  \ref{evaluation-morphism}, pick an ordinary point $z_0\in C$ and 
let $\hat{U}$ denote the fundamental matrix of $[A]$ in $GL_n( C[[z-z_0]] )$ such that $\hat{U}(z_0)={\Id}$. 
To an element of $V$, represented by its vector $v$ of constants on the basis given by the fundamental matrix $\hat{U}$, we associate 
the element $\phi_v:=\hat{U}v$ of  $\CM\otimes K$; this gives the isomorphism 
$V\otimes K\rightarrow \CM\otimes K$. For a tensor construction $\Const$ on $V$, to a $v\in\Const(V)$ we associate the element
 $\phi_v:=\Const(\hat{U}).v\in \Const(\CM)\otimes K$.
For self-containedness, we recall  the following standard result.

\begin{lemma} In the above notations,
 $I$ is an invariant in $\Const(V)$ if and only if  $\phi_I$ is a rational solution of $[\const(A)]$, i.e. $\phi_I= {}^t(\lambda_1,\cdots , \lambda_m)$ with $\lambda_i \in{k}$, for all $i$.
\\
Similarly, $I$ is a semi-invariant in $\Const(V)$  if and only if $\phi_I$ is an exponential solution of $[\const(A)]$), i.e. $\phi_I= \exp(\int f)\; {}^t(\lambda_1, \cdots , \lambda_m)$ with $f\in k$ and $\lambda_i \in{k}$, for all $i$.
\end{lemma}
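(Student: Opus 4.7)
The plan is to translate the two equivalences through the Galois correspondence once we understand how $\sigma\in G$ acts on the solution $\phi_I=\Const(\hat U)v_I$. The key observation is that $\Const(\hat U)$ is a fundamental solution matrix of $[\const(A)]$ (by the formula $\Const(U)'=\const(A)\Const(U)$ established earlier) so $\phi_I\in K^N$ is indeed a solution of the tensored system, and the question is only about the field where its entries live.

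First I would set up the Galois action carefully. Writing $M_\sigma\in GL_n(C)$ for the matrix of $\sigma\in G$ in the basis $(Y_i)$ (the columns of $\hat U$), we have $\sigma(\hat U)=\hat U\,M_\sigma$, and because entries of $\Const(\hat U)$ are polynomials in entries of $\hat U$ and $1/\det(\hat U)$ (and $\sigma$ is a ring automorphism fixing $C$),
$$\sigma(\Const(\hat U))=\Const(\sigma(\hat U))=\Const(\hat U)\,\Const(M_\sigma).$$
Since $v_I\in \Const(V)$ has constant entries, this gives
$$\sigma(\phi_I)=\Const(\hat U)\,\Const(M_\sigma)\,v_I.$$

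For the invariant case, $I$ is invariant iff $\Const(M_\sigma)v_I=v_I$ for every $\sigma\in G$; multiplying by the invertible $\Const(\hat U)$, this is equivalent to $\sigma(\phi_I)=\phi_I$ for all $\sigma$. By the Galois correspondence $(K^N)^G=k^N$, this is equivalent to $\phi_I\in k^N$, i.e. $\phi_I$ is a rational solution.

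For the semi-invariant case, the same computation shows $I$ is a semi-invariant with character $\chi$ iff $\sigma(\phi_I)=\chi_\sigma\phi_I$ for all $\sigma\in G$. Assuming this, pick an index $j$ with $\phi_{I,j}\neq 0$ (if $\phi_I=0$ the statement is trivial). Then for every $\sigma$, $\sigma(\phi_{I,i}/\phi_{I,j})=\phi_{I,i}/\phi_{I,j}$, so $\lambda_i:=\phi_{I,i}/\phi_{I,j}\in K^G=k$; and $\sigma(\phi_{I,j}'/\phi_{I,j})=\phi_{I,j}'/\phi_{I,j}$ since $\chi_\sigma\in C$, so $f:=\phi_{I,j}'/\phi_{I,j}\in k$. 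Thus $\phi_{I,j}=c\exp(\int f)$ for a nonzero constant $c$, and $\phi_I$ has the claimed form (absorbing $c$ into $\lambda_j=1$). Conversely, if $\phi_I=\exp(\int f)\,{}^t(\lambda_1,\dots,\lambda_m)$ with $f,\lambda_i\in k$, then for each $\sigma$ the element $\sigma(\exp(\int f))$ still satisfies $y'=fy$, so $\sigma(\exp(\int f))=\chi_\sigma\exp(\int f)$ for some $\chi_\sigma\in C^*$; since the $\lambda_i$ are fixed, $\sigma(\phi_I)=\chi_\sigma\phi_I$, and the cocycle identity $\chi_{\sigma\tau}=\chi_\sigma\chi_\tau$ is immediate, so $I$ is a semi-invariant of $G$ with character $\chi$.

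The only delicate point is the conversion between the two pictures (action on $v_I$ via $\Const(M_\sigma)$ versus action on $\phi_I$), which boils down to the identity $\sigma(\Const(\hat U))=\Const(\hat U)\Const(M_\sigma)$; everything else is Galois correspondence and the standard characterization of hyperexponential (exponential-over-rational) solutions via logarithmic derivatives.
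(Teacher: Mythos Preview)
Your proof is correct and follows essentially the same route as the paper's: the key identity $\sigma(\Const(\hat U))=\Const(\hat U)\,\Const(M_\sigma)$, the Galois correspondence $K^G=k$, and the trick of dividing by a nonzero coordinate to extract the exponential factor and the rational part in the semi-invariant case. The paper's argument is terser and only spells out the forward directions, whereas you have written out both implications carefully (including the converse for semi-invariants via $\sigma(\exp(\int f))=\chi_\sigma\exp(\int f)$), but there is no substantive difference in method.
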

\begin{proof} Let $g\in G$. It acts on $\hat{U}$ by $g(\hat{U})=\hat{U}.M_g$. Given an element $I\in V$ represented by its vector $v$ of coefficients in $C$, the coefficient vector of $g(I)$ is $M_g.v$. 
If $I$ is an invariant in $\Const(V)$, then $\Const(M_g).v=v$ so $g(\phi_I)=\phi_I$, hence $\phi_I\in{k}^n$. 
Similarly, if $I$ is a semi-invariant, then $\Const(M_g).v= \chi_{g} v$ and $g(\phi_I)=  \chi_{g} \phi_I$. 
Pick a non-zero coordinate, say $f_1$, of $\phi_I$. As $g(f_1)=  \chi_{g} f_1$ for all $g\in G$, we have $f_1'/f_1\in{k}$.
Now, for any other coordinate $f_i$ of $\phi_I$, $g(f_i/f_1)=f_i/f_1$ so $f_i/f_1\in{k}$.
\end{proof}

This lemma gives the standard dictionary between rational (resp. exponential) solutions of $Y'=\const(A)Y$ and invariants (resp. semi-invariants) in 
$\Const(V)$. 
\begin{dictionnary}\label{dico}
If $v\in C^N$ is the vector of coefficients of an invariant $I\in\Const(V)$, the corresponding invariant of the module (rational solution of $Y'=\const(A)Y$)
is $\phi_I=\Const(\hat{U}).v$. 
\\
Conversely, given an invariant $\phi$ of the module (rational solution of $Y'=\const(A)Y$), we use the evaluation technique from section \ref{evaluation-morphism}: as $\hat{U}(z_0)={\Id}$, we will have $\phi=\Const(\hat{U}).v$ where $v=\phi(z_0)$ is the coefficient vector of the invariant $I\in\Const(V)$ associated  to $\phi$ (because $\Const({\Id})$ is again the identity). 
\end{dictionnary}
A similar dictionary is used e.g. in \cite{Br01a},\cite{CoSi99a}, \cite{HoWe97a}.
We may now introduce a key definition for our reduction criteria.

\begin{definition}\label{inv-a-coeffs-csts}
Let $[A]: Y'=AY$ with $A\in\mathrm{Mat}(n,{k})$ be a linear differential system. 
Let  $\mathcal{M}:=({k}^n \,,\, \nabla=\partial -A)$ be the associated differential module with basis $\{e_i\}$.

An invariant $\phi\in\mathrm{Const}(\mathcal{M})^{\nabla}$ of the module (a rational solution $\phi={}^t(\lambda_1,\ldots,\lambda_N)$
of $\phi'=\const(A)\phi$) will be said to have {\em constant coefficients} when $\lambda_i\in C$ for all $i=1,\ldots,N$.

A semi-invariant $\phi\in\mathrm{Const}(\mathcal{M})\otimes K$ of the module (an exponential solution 
$\phi_I=\exp(\int f)\cdot^t(\lambda_1,\cdots ,\lambda_N)$, with $f\in k$ and $\lambda_i \in k$, of $\phi'=\const(A)\phi$)
 will be said to have {\em constant coefficients} if $f$ can be chosen so that $\lambda_i\in C$ for all $i=1,\ldots,N$.

\end{definition}


\section{Characterization of Reduced Forms via their Semi-Invariants}
In this section, we give our main result, a simple criterion to constructively characterize  a system in reduced form.

\begin{theorem} \label{main-criterion}
Let ${k}$ denote a finitely generated differential extension of $\bar{\Q}$.
Let $[A]: Y'=AY$ with $A\in\mathrm{Mat}(n,{k})$ be a linear differential system. 
Let  $\mathcal{M}:=({k}^n \,,\, \nabla=\partial -A)$ be the associated differential module.
\\
The system $[A]$ is in reduced form if and only if any semi-invariant of the module $\CM$ 
has constant coefficients 
(in the sense of definitions \ref{invariant-module} and \ref{inv-a-coeffs-csts}).
\end{theorem}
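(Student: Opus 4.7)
The plan is to prove the two directions separately, using Chevalley's theorem (Remark~\ref{chevalley}) as the bridge between the Lie algebra $\mathfrak{g}$ and the world of semi-invariants. For the easy direction, I will assume that $[A]$ is in reduced form and take any semi-invariant $I \in \Const(V)$ of $G$, with character $\chi : G \to C^*$ and coefficient vector $v_I$. Fix a Wei-Norman decomposition $A = \sum_i f_i A_i$. Because $\mathfrak{g}$ is a $C$-subspace of $\mathrm{Mat}(n,C)$, pick a $C$-complement $\mathrm{Mat}(n,C) = \mathfrak{g} \oplus \mathfrak{w}$; splitting each $A_i$ accordingly and using $C$-linear independence of the $f_i$ shows that the hypothesis $A \in \mathfrak{g}(k)$ forces each $A_i \in \mathfrak{g}$. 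Differentiating $\chi$ at the identity yields $\const(A_i)\, v_I = d\chi(A_i)\, v_I$, so Lemma~\ref{crucial_for_criterion2} gives $\const(A)\, v_I = f \cdot v_I$ with $f := \sum_i f_i\, d\chi(A_i) \in k$. Choosing the primitive $\int f$ that vanishes at the ordinary point $z_0$, the function $\tilde\phi := \exp(\int f)\, v_I$ solves $\tilde\phi' = \const(A)\, \tilde\phi$ and satisfies $\tilde\phi(z_0) = v_I = \phi_I(z_0)$ by Dictionary~\ref{dico}; Cauchy uniqueness then yields $\phi_I = \tilde\phi$, which has constant coefficients.

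For the converse, I will invoke Chevalley's theorem to obtain a tensor construction $\Const$ and a line $D = C\, v_I \subset \Const(V)$ such that $G$ is exactly the stabilizer of $D$ in $\mathrm{GL}(V)$. Passing to the $\varepsilon$-formalism of Section~\ref{subsec:actions} (so that $N \in \mathfrak{g}$ iff $\Id + \varepsilon N \in G(C[\varepsilon])$), the expansion $\Const(\Id + \varepsilon N)\, v_I = v_I + \varepsilon\, \const(N)\, v_I$ translates the line-stabilizer description into the infinitesimal characterization $\mathfrak{g} = \{N \in \mathfrak{gl}(V) : \const(N)\, v_I \in C \cdot v_I\}$. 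Since $I$ is a semi-invariant of $G$, Dictionary~\ref{dico} identifies its companion $\phi_I$ as a semi-invariant of $\CM$, which by hypothesis has constant coefficients. Using the normalization $\Const(\hat U)(z_0) = \Id_N$ from Section~\ref{evaluation-morphism} to identify the constant coefficient vector with $v_I$, one writes $\phi_I = \exp(\int f)\, v_I$ for some $f \in k$; substituting into $\phi_I' = \const(A)\, \phi_I$ gives $\const(A)\, v_I = f\, v_I$. Applying Lemma~\ref{crucial_for_criterion1} to the Wei-Norman decomposition $\const(A) = \sum_i f_i \const(A_i)$ furnished by Lemma~\ref{crucial_for_criterion2} produces constants $a_i \in C$ with $\const(A_i)\, v_I = a_i\, v_I \in C \cdot v_I$; the Lie-algebra Chevalley characterization then forces $A_i \in \mathfrak{g}$ for every $i$, so $A \in \mathfrak{g}(k)$, as desired.

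The main obstacle I anticipate is the Lie-algebra upgrade of Chevalley's theorem: the paper cites only the group-theoretic version, so I would need to justify cleanly the infinitesimal statement $\mathfrak{g} = \{N : \const(N)\, v_I \in C \cdot v_I\}$ via the $\varepsilon$-formalism sketched above. A secondary bookkeeping point is ensuring that the evaluation identity $\Const(\hat U)(z_0) = \Id_N$ applies to the particular construction $\Const$ furnished by Chevalley; this is guaranteed by Remark~\ref{apparent-singularities} together with the discussion in Section~\ref{evaluation-morphism}. Once these points are in place, the rest of the argument is essentially a transport of the Wei-Norman lemmas~\ref{crucial_for_criterion1} and~\ref{crucial_for_criterion2} through Chevalley's theorem, with the two implications being near-mirror images of each other.
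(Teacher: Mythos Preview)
Your proposal is correct and follows essentially the same route as the paper's proof: both directions use the normalized fundamental matrix $\hat U$ with $\hat U(z_0)=\Id$, the Wei-Norman lemmas~\ref{crucial_for_criterion1} and~\ref{crucial_for_criterion2}, and Chevalley's theorem together with its infinitesimal version $\mathfrak{g}=\{N:\const(N)v_I\in C\cdot v_I\}$, then match initial conditions at $z_0$ to force the coefficient vector to equal $v_I$. Your complement argument showing that each $A_i$ of a Wei-Norman decomposition lies in $\mathfrak{g}$, and your identification of the paper's constants $c_i$ as $d\chi(A_i)$, are slight elaborations of steps the paper simply asserts, but the architecture is identical.
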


\begin{proof}
As in section \ref{evaluation-morphism}, we fix a fundamental solution matrix $\hat{U}$ of $Y'=AY$ at an ordinary point $z_0$ such that $\hat{U}(z_0)={\Id}_n$.
This choice of basis for $V$ induces a matrix representation of $G$ and $\mathfrak{g}$ in $GL(V)$.
\\

Assume that $Y'=AY$ is in reduced form. We therefore have a Wei-Norman decomposition  $A=\sum f_iA_i$ 
($f_i$ in $k$, linearly independent over $C$) where each $A_i\in \mathfrak{g}$.
\\
Let $I$ be a semi-invariant in a tensor construction $\Const(V)$. We let  $v:=^t(\alpha_1,\cdots , \alpha_N) \in C^N$ 
be the coordinates of $I$ in $\Const(V)$ relatively to the basis induced by $\hat{U}$.
We let, as in dictionary \ref{dico}, $\phi_I:=\Const(\hat{U})\cdot v$. As $I$ is a semi-invariant, the associated exponential solution of $Y'=\const(A) Y$
is  $\phi_I=\exp(\int f)\cdot^t(\lambda_1,\cdots ,\lambda_N)$ with $f$ and the 
$\lambda_i$ in $k$. Of course,  as $\Const(\hat{U})(z_0)={\Id}_N$, we have
$\phi_I(z_0)=v$.

We will show that the $\lambda_i$ may in fact be forced to be constants. Recall that $\const(A)$ decomposes as 
$\const(A)=\sum_i f_i \const(A_i)$. As $A_i\in\mathfrak{g}$ and $v$ is the coordinate vector of the semi-invariant $I$, 
there exist constants $c_i$ such that
$\const(A_i) v = c_i v$. Define $\varphi :=\exp\left( \int \sum_i c_i f_i\right)$ and choose the integration constant such that $\varphi(z_0)=1$.
Let $F$ be the vector in $K^n$ defined by $F:=\varphi\cdot v$. As the $\alpha_i$ are constant, we have 
$$F'=\varphi'\cdot v = \varphi \left( \sum c_i f_i\right) v.$$
Now $$\const(A) \cdot F = \varphi\left( \sum_i f_i  \const(A_i) v \right) =  \varphi\left( \sum c_i f_i\right) v.$$
It follows that $F'=\const(A) \cdot F$. As $\varphi(z_0)=1$, we have $F(z_0)=v$. So $F$ and $\phi_I$ are two (exponential) solutions
of  $Y'=\const(A) Y$ with the same initial condition $\phi_I(z_0)=F(z_0)=v$ at an ordinary point $z_0$, hence $\phi_I=F$ and $\phi_I$ indeed has constant coefficients.
\\

We now prove the converse.  Assume that any semi-invariant of the system $Y'=AY$ has constant coefficients (in the sense of definition~\ref{inv-a-coeffs-csts}). 
By Chevalley's theorem (see remark \ref{chevalley}), there exist a tensor construction $\Const(V)$ and an element $I\in \Const(V)$ such that the algebraic group $G$
is exactly the set of automorphisms of $V$ which leave the line generated by $I$ stable. If we let $v$ denote the coordinates of 
$I$ in $\Const(V)$, then 
$$\mathfrak{g}=\left\{ M \in \mathrm{Mat}(n,C) \, | \exists c\in C, \, \const(M) v =c v\right\}.$$
As above,  let $\phi_I:=\Const(\hat{U})\cdot v$ be the associated exponential solution of $Y'=\const(A) Y$. Note that $\phi_I(z_0)=v$.
By hypothesis, there exist an exponential $\varphi$ (i.e. $\varphi\in K$ and $\exists f\in{k}$ such that $\varphi'/\varphi=f$) and a vector $w\in C^N$ of constants such that $\phi_I=\varphi w$. If we choose the integration constant such that $\varphi(z_0)=1$, we have $w=v$ (evaluate in $z_0$).
\\
Let $A=\sum_i f_iA_i$ be a Wei-Norman decomposition of $A$. Then $\const(A)=\sum_i f_i \const(A_i)$ is a Wei-Norman decomposition 
of $\const(A)$ (lemma  \ref{crucial_for_criterion2}). 
Now $\phi_I$ is a solution of $Y'=\const(A)Y$; we compute $$0= \phi_I'-\const(A)\phi_I = \varphi\left( f v - \sum_i f_i \const(A_i) v\right).$$
So $v$ is a constant eigenvector of $\sum_i f_i \const(A_i)$. By lemmas  \ref{crucial_for_criterion2} and \ref{crucial_for_criterion1}, it follows that $v$ is an eigenvector of each 
$\const(A_i)$ which in turn implies that, for all $i$, we have $A_i\in \mathfrak{g}$. Hence $A$ is in reduced form.
\end{proof}

A vector $v\in \Const(V)$ is  called a {\em Chevalley semi-invariant} for the group $G$ when $$G=\left\{ M \in{GL}(n,C) \, | \exists c\in C, \, \Const(M) v =c v\right\}.$$
The above proof shows the following finer corollary: 
\begin{corollary}
In the above notations, the system $[A]$ is in reduced form if and only if there is a Chevalley semi-invariant $v$ in a tensor construction $\Const(V)$ of $G$ such that 
$\phi_v:=\Const(\hat{U}) v$ has constant coefficients. 
\end{corollary}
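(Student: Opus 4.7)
The plan is to observe that this corollary is essentially a refinement of Theorem~\ref{main-criterion} in which the ``for all semi-invariants'' on the necessity side is weakened to ``for some Chevalley semi-invariant'' on the sufficiency side. Both directions follow by very short arguments once the machinery of the theorem is in place.

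For the forward direction, I would proceed as follows. Assume $[A]$ is in reduced form. Invoke Chevalley's theorem (as in remark~\ref{chevalley}) to produce a tensor construction $\Const(V)$ and a vector $v\in\Const(V)$ whose stabilizer line is preserved exactly by $G$; this $v$ is by definition a Chevalley semi-invariant of $G$. Now apply Theorem~\ref{main-criterion}: since \emph{every} semi-invariant of the module $\CM$ has constant coefficients when $[A]$ is in reduced form, this particular $\phi_v = \Const(\hat{U})v$ does as well.

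For the converse direction, I would simply re-run the converse half of the proof of Theorem~\ref{main-criterion}, with the crucial observation that that argument never used the hypothesis on every semi-invariant — it only used one semi-invariant, namely the Chevalley semi-invariant supplied by Chevalley's theorem. So, given a Chevalley semi-invariant $v$ such that $\phi_v=\Const(\hat{U})v$ has constant coefficients, write $\phi_v = \varphi\, w$ with $\varphi\in K$, $\varphi'/\varphi=f\in k$, and $w\in C^N$ constant; normalize $\varphi(z_0)=1$ so that $w = \phi_v(z_0)=v$. Take a Wei-Norman decomposition $A=\sum_i f_i A_i$, so by Lemma~\ref{crucial_for_criterion2} one has $\const(A)=\sum_i f_i\,\const(A_i)$. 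Substituting $\phi_v = \varphi v$ into $\phi_v' = \const(A)\phi_v$ yields $\sum_i f_i \const(A_i)v = f v$, i.e.\ $v$ is a constant eigenvector of $\sum_i f_i \const(A_i)$. By Lemma~\ref{crucial_for_criterion1}, $v$ is then an eigenvector of each $\const(A_i)$, which by the Chevalley characterization of $\mathfrak{g}$ (i.e.\ $\mathfrak{g}=\{M:\exists c\in C,\ \const(M)v=cv\}$) gives $A_i\in\mathfrak{g}$ for every $i$, so $A\in\mathfrak{g}(\bar{k})$.

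There is no real obstacle here: the content is essentially bookkeeping, noticing that the converse half of the proof of Theorem~\ref{main-criterion} is local to a single Chevalley semi-invariant and therefore yields the stronger statement. The only mildly delicate point is keeping straight that the Chevalley semi-invariant's existence (forward direction) and its designated role (backward direction) are independent: in the forward direction we produce one from Chevalley's theorem and invoke the theorem, while in the backward direction we take the one supplied by hypothesis and use it directly in the eigenvector computation.
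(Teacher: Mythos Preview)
Your proposal is correct and matches the paper's approach exactly: the paper simply remarks that ``the above proof shows the following finer corollary,'' meaning precisely what you observe—that the forward direction of Theorem~\ref{main-criterion} applies in particular to any Chevalley semi-invariant, while the converse half of that proof already used only a single Chevalley semi-invariant and so yields the stronger sufficiency statement directly.
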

Another  consequence of the proof of this theorem is that it may be adapted to a result for partial reduction.

\begin{corollary}
Consider a differential system $Y'=AY$ with $A\in \mathrm{Mat}(n,k)$.
Let $G$ be the differential Galois group and $H\supset G$ be some group containing $G$ with Lie algebra $\mathfrak{h}=Lie(H)$.
Then $A\in \mathfrak{h}(k)$ if and only if, for any semi-invariant $I$ of $G$ which is also a semi-invariant of $H$, its image $\phi_I$ has constant coefficients.
\end{corollary}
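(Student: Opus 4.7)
The plan is to transplant the proof of Theorem~\ref{main-criterion} with $\mathfrak{g}$ systematically replaced by $\mathfrak{h}$: that proof used $\mathfrak{g}=Lie(G)$ in only two precise places --- to infer, from $M\in\mathfrak{g}$ and $I$ a semi-invariant of $G$, that the coordinate vector $v$ of $I$ is a constant eigenvector of $\const(M)$, and to invoke Chevalley's theorem to produce a semi-invariant whose stabilizer is exactly $G$. Both steps transfer mechanically when $G$ is replaced by $H$. Moreover, since $G\subset H$, every semi-invariant of $H$ is automatically a semi-invariant of $G$, so the hypothesis of the corollary is really just a statement about semi-invariants of $H$; the mention of $G$ is for emphasis.

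For the forward direction, I fix a Wei-Norman decomposition $A=\sum_i f_i A_i$ with $A_i\in\mathfrak{h}$. Given a semi-invariant $I$ of $H$ with coordinate vector $v\in C^N$ and character $\chi$, the $\varepsilon$-formalism applied to $\Const(\Id+\varepsilon A_i)v=\chi(\Id+\varepsilon A_i)v$ yields $\const(A_i)v=c_iv$ with $c_i=(d\chi)(A_i)\in C$. I then copy verbatim the construction from the proof of Theorem~\ref{main-criterion}: set $\varphi:=\exp\!\bigl(\int\sum_i c_i f_i\bigr)$ normalized by $\varphi(z_0)=1$, and $F:=\varphi\cdot v$. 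A direct computation gives $F'=\const(A)F$ and $F(z_0)=v=\phi_I(z_0)$, so uniqueness of the Cauchy problem at the ordinary point $z_0$ forces $\phi_I=F$, which has constant coefficients in the sense of Definition~\ref{inv-a-coeffs-csts}.

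For the converse, I apply Chevalley's theorem (Remark~\ref{chevalley}) to the group $H\subset GL(V)$: this produces a tensor construction $\Const(V)$ and a line $D=Cv\subset \Const(V)$ whose stabilizer is exactly $H$. Then $v$ is a semi-invariant of $H$ (hence of $G$) and the infinitesimal description
\[
\mathfrak{h}=\{M\in\mathrm{Mat}(n,C)\mid \exists c\in C,\ \const(M)v=cv\}
\]
holds via the $\varepsilon$-formalism. By hypothesis $\phi_I=\Const(\hat U)v=\varphi\cdot w$ with $w\in C^N$ and $\varphi$ exponential; evaluation at $z_0$ forces $w=v$. Substituting into $\phi_I'=\const(A)\phi_I$ and expanding $\const(A)=\sum_i f_i\const(A_i)$ via Lemma~\ref{crucial_for_criterion2}, Lemma~\ref{crucial_for_criterion1} (with $\sum_i f_i\const(A_i)$ in place of $A$ and $v$ in place of $w$) forces $\const(A_i)v=c_iv$ for each $i$, so $A_i\in\mathfrak{h}$ and $A\in\mathfrak{h}(k)$. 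I do not anticipate any genuine obstacle: the whole content is in observing that the machinery of Theorem~\ref{main-criterion} treats $G$ only through its stabilizer-of-a-line description and the Lie correspondence, both of which apply to $H$ without modification.
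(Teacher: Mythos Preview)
Your proposal is correct and follows exactly the approach the paper indicates: the paper's own proof is a two-line remark instructing the reader to rerun the proof of Theorem~\ref{main-criterion} with a Chevalley semi-invariant for $H$ in place of one for $G$, and you have spelled out precisely that adaptation. Your observation that the hypothesis reduces to a statement about semi-invariants of $H$ (since $G\subset H$ forces every $H$-semi-invariant to be a $G$-semi-invariant) is a helpful clarification that the paper leaves implicit.
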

\begin{proof} The implication is proved like in the theorem. For the converse, replace $I$ in the above proof by a Chevalley semi-invariant defining $H$ (instead of $G$).
\end{proof}

We may illustrate this result with a very simple example. If $G\subset H:=\mathrm{SL}(n,C)$, then the determinant $\det$ of a fundamental solution matrix is an invariant - and this is the only invariant of $G$ which is an invariant of $H$. 
The equation that it satisfies is $\det'=Tr(A)\det$. 
Here, $\mathfrak{h}$ is the set of matrices with zero trace; we see that $A$ is in $\mathfrak{h}(k)$ if and only if $\mathrm{det}'=0$, i.e. if $\mathrm{det}$ is constant.

\section{A Reduction Procedure when $G$ is Reductive and Unimodular}
 
When $G$ is reductive and unimodular, we may refine our reduction criterion by reducing it to invariants in $\Sym(nV)$. Furthermore, we give a reduction procedure in this case (which also gives a constructive proof of the Kolchin-Kovacic reduction theorem). 
In what follows, we assume that $G$ is reductive and unimodular;  the term "Invariant" will thus refer specifically to an invariant in $\Sym(nV)$ (and an invariant of the module will be an invariant in  $\Sym(n\CM)^\nabla$).

\subsection{A Characterization of Reduced Forms via Invariants when $G$ is Reductive and Unimodular}

When $G$ is reductive, we start by recalling a useful variant of Chevalley's theorem (see also \cite{Bo91a}, chap.II \S 5.5, page 92)
\begin{lemma}\label{chevalley-reductif}
Let $G\subset GL_n(C)$ be a {\em reductive}   linear algebraic group acting on an $n$-dimensional vector space $V$.
There exists an invariant $I$ in a tensor construction on $V$ such that $G=\{g\in GL_n(C) \; | \; g(I)=I\}$.
\end{lemma}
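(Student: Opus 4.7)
The plan is to upgrade the standard Chevalley theorem (semi-invariant version), which is already recalled in Remark~\ref{chevalley}, by exploiting the complete reducibility that is available in the reductive setting.

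First, I would apply the classical Chevalley theorem to produce a tensor construction $W=\Const(V)$ and a line $D=Cv\subset W$ such that $G$ coincides with the stabilizer of $D$ in $GL(V)$. Call $\chi:G\to C^\star$ the character by which $G$ acts on $D$, so that $g\cdot v=\chi(g)v$ for all $g\in G$. Since $G$ is reductive and we work in characteristic zero, every rational finite-dimensional $G$-module is completely reducible; in particular the submodule $D\subset W$ admits a $G$-stable complement $W'$, so that $W=D\oplus W'$.

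Next I would produce the natural dual vector. Let $v^\star\in W^\star$ be the linear functional determined by $v^\star(v)=1$ and $v^\star|_{W'}=0$. For $g\in G$ and $w=cv+w'\in D\oplus W'$, the $G$-stability of $W'$ gives $g^{-1}w=c\chi(g)^{-1}v+g^{-1}w'$ with $g^{-1}w'\in W'$, hence $v^\star(g^{-1}w)=\chi(g)^{-1}v^\star(w)$. Therefore $v^\star$ is a semi-invariant with character $\chi^{-1}$, and the element
$$
I:=v\otimes v^\star\in W\otimes W^\star
$$
satisfies $g\cdot I=\chi(g)\chi(g)^{-1}I=I$ for every $g\in G$, so $I$ is an honest invariant in a tensor construction on $V$ (note that $W\otimes W^\star$ is built from $V$ by the operations allowed in Definition~\ref{tensor-construction}).

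The key step, which I expect to be the most delicate, is to verify that the stabilizer of $I$ in $GL(V)$ is not strictly larger than $G$. Regarding $I$ as the rank-one operator $w\mapsto v^\star(w)\,v$ in $\End(W)\simeq W\otimes W^\star$, the action of $g\in GL(V)$ sends $I$ to the operator $w\mapsto v^\star(g^{-1}w)\,g(v)$. Equating this with $I$ for all $w$ and specializing to $w=v$ gives
$$
g(v)=\frac{v}{v^\star(g^{-1}v)},
$$
so $g(v)\in Cv$, i.e.\ $g$ stabilizes the line $D$. By the original Chevalley property of $D$, this forces $g\in G$. Combining inclusions, $G=\{g\in GL(V)\mid g(I)=I\}$, as required.

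The only genuine ingredient beyond the semi-invariant version of Chevalley's theorem is the complete reducibility of $G$-representations, which is exactly what reductivity provides; everything else is a direct manipulation with the tautological pairing between $W$ and $W^\star$.
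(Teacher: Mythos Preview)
Your proof is correct and follows essentially the same route as the paper's: start from the Chevalley line $D=Cv$, use complete reducibility to split off a $G$-stable complement, pass to the dual to obtain $v^\star$ with character $\chi^{-1}$, and set $I=v\otimes v^\star\in W\otimes W^\star$. The only difference is that the paper leaves the equality $G=\{g\mid g(I)=I\}$ as ``easily seen'', while you supply the rank-one operator argument explicitly.
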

\begin{proof}
By Chevalley's theorem (see  remark \ref{chevalley}), there exists a tensor construction $W$ on $V$ and a one dimension subspace ${\cal V}=C.v$ of $W$ such that 
$G=\{g \in GL_n(C) |  g(v)=\chi_g v, \chi_g\in C^*\}$. As $G$ acts completely reducibly, ${\cal V}$ admits a $G$-stable supplement $\widetilde{\cal V}$ in $W$ so that the dual $W^\star$ decomposes as a direct sum $W^\star = {\cal V}^\star \oplus \widetilde{\cal V}^\star$ of $G$-spaces and $G$ acts on $v^\star$ as
$g(v^\star)= 1/\chi_g v^\star$ (see e.g. \cite{We95a} lemmas 15 and 16 and their proofs). Then, $I:= v\otimes v^\star$ is an invariant of $G$ in $W\otimes W^\star$
and it is easily seen that $G=\{g\in GL_n(C) \; | \; g(I)=I\}$.
\end{proof}
Again, such an invariant $I \in \Const(V)$ giving $G=\{g\in GL_n(C) \; | \; g(I)=I\}$ will be called a {\em Chevalley invariant for $G$}.
For a reductive group $G$, the proof of theorem \ref{main-criterion} can be straightforwardly adapted (and we let the readers adapt its two corollaries to the case of invariants too).

\begin{proposition}\label{criterion-reductive}
Let $k$ denote a finitely generated differential extension of $\bar{\Q}$.
Let $[A]: Y'=AY$ with $A\in\mathrm{Mat}(n,{k})$ be a linear differential system. 
Let  $\mathcal{M}:=({k}^n \,,\, \nabla=\partial -A)$ be the associated differential module.
We assume that the differential Galois group $Gal([A])$ of $[A]$  is reductive.
\\
The system $[A]$ is in reduced form if and only if any {\em invariant} of the module $\CM$ 
has constant coefficients 
(in the sense of definitions \ref{invariant-module} and \ref{inv-a-coeffs-csts}) 
\end{proposition}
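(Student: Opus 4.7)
The plan is to adapt the proof of Theorem~\ref{main-criterion} essentially verbatim, replacing the Chevalley semi-invariant used there by a genuine \emph{invariant} provided by Lemma~\ref{chevalley-reductif}. The reductivity hypothesis enters only through this lemma, which gives a tensor construction $\Const(V)$ and a vector $v\in\Const(V)$ with $G=\{g\in GL_n(C)\mid \Const(g)v=v\}$, and consequently, by differentiation, $\mathfrak{g}=\{N\in\mathrm{Mat}(n,C)\mid\const(N)v=0\}$. Fix, as in section~\ref{evaluation-morphism}, a normalized fundamental solution matrix $\hat{U}$ at an ordinary point $z_0$ with $\hat{U}(z_0)=\Id_n$.

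For the forward direction, I would start from a Wei-Norman decomposition $A=\sum_i f_iA_i$ with each $A_i\in\mathfrak{g}$, take an arbitrary invariant $\phi$ of the module $\CM$, and use dictionary~\ref{dico} to view $w:=\phi(z_0)\in C^N$ as an invariant in $\Const(V)$. The key observation is that $g$-invariance of the vector $w$ translates into $\const(N)w=0$ for every $N\in\mathfrak{g}$, so in particular $\const(A_i)w=0$ for each $i$, hence $\const(A)w=\sum_i f_i\const(A_i)w=0$. The constant vector $w$ is then a solution of $Y'=\const(A)Y$ with the same initial value as $\phi$ at $z_0$; uniqueness of solutions at an ordinary point forces $\phi=w\in C^N$, so $\phi$ has constant coefficients.

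For the converse, I would apply Lemma~\ref{chevalley-reductif} to pick a Chevalley invariant $I\in\Const(V)$ with coefficient vector $v$, form $\phi_I:=\Const(\hat{U})\cdot v$, and observe (as $G$ fixes $I$) that $\phi_I$ is a rational, hence invariant, solution of $Y'=\const(A)Y$. By hypothesis $\phi_I$ has constant coefficients, and matching initial values at $z_0$ using $\Const(\hat{U})(z_0)=\Id_N$ forces $\phi_I=v$ identically. Substituting back into the equation yields $\const(A)v=0$; expanding via the Wei-Norman decomposition as $\const(A)=\sum_i f_i\const(A_i)$ (lemma~\ref{crucial_for_criterion2}) and using linear independence of the $f_i$ over $C$, I obtain $\const(A_i)v=0$ for each $i$, whence each $A_i\in\mathfrak{g}$ by the Chevalley characterization, and therefore $A\in\mathfrak{g}(k)$.

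The only real obstacle is conceptual rather than computational: it is precisely the upgrade of Chevalley's theorem from a stabilized line (a semi-invariant) to a pointwise fixed vector (an invariant), and this is where reductivity is indispensable, via complete reducibility and the $v\otimes v^\star$ construction in Lemma~\ref{chevalley-reductif}. Once this upgrade is in place, the proof is strictly simpler than that of Theorem~\ref{main-criterion} because the trivial character kills the exponential factor $\varphi$ and every computation takes place over the constant field $C$.
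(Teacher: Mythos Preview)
Your proposal is correct and follows essentially the same approach as the paper. The paper writes out the forward implication in the same way (show $\const(A_i)v=0$, hence $v$ itself solves the system, then match initial conditions at $z_0$), and for the converse it simply instructs the reader to rerun the converse of Theorem~\ref{main-criterion} with Lemma~\ref{chevalley-reductif} supplying an invariant in place of a semi-invariant---precisely what you spell out.
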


\begin{proof} The implication is proved as in theorem \ref{main-criterion} (invariants are semi-invariants) as follows. Let $I$ be an invariant in a tensor construction $\Const(V)$.  We let  $v:=^t(\alpha_1,\cdots , \alpha_N) \in C^N$ 
be the coordinates of $I$ in $\Const(V)$ relatively to the basis induced by $\hat{U}$ (from section \ref{evaluation-morphism}).
We let, as in dictionary \ref{dico}, $\phi_I:=\Const(\hat{U})\cdot v$. As $I$ is an invariant, the associated rational solution of $Y'=\const(A) Y$
is  $\phi_I=^t(\lambda_1,\cdots ,\lambda_N)$ with  the  $\lambda_i$ in $k$. Of course,  as $\Const(\hat{U})(z_0)={\Id}_N$, we have
$\phi_I(z_0)=v$.  Recall that $\const(A)$ decomposes as  $\const(A)=\sum_i f_i \const(A_i)$. As $A_i\in\mathfrak{g}$ and $v$ is the coordinate vector of the invariant $I$, we have 
$\const(A_i) v = 0$ so $\const(A) v = \sum_i f_i  \const(A_i) v =0$. As $v'=0$, we see that 
$v'=\const(A)v$. Now, as $\phi_I(z_0)=v$, we see that $v$ and $\phi_I$
 are two (rational) solutions
of  $Y'=\const(A) Y$ with the same initial condition at an ordinary point $z_0$, hence $\phi_I=v$ which has constant coefficients.
\\
To see the converse,
just use the above lemma \ref{chevalley-reductif} to replace "semi-invariant" by "invariant" and "exponential" by "rational" in the proof of the converse in theorem \ref{main-criterion}. 
\end{proof}

We now assume in the sequel that $G$ is reductive and unimodular and prove a finer version which restricts the type of constructions used then.
For this we recall some facts.
\\
Let $\Sym(nV)^{G}$ denote the ring of tensors of $\Sym(nV):=\Sym(V\oplus\cdots\oplus V)$ ($n$ copies of $V$) which are invariant 
under the action of $G$.
As $G$ is reductive this ring is finitely generated. Let $I_1,\cdots ,I_p$ be a 
system of generators of this ring.
As  $G$ is reductive and unimodular we can use \cite{Co96a} (or \cite{CoSi99a}) and convert 
this basis of $\Sym(nV)^{G}$ to a basis $P_1,\cdots , P_p$ of the ideal $J$ of 
algebraic relations with coefficients in $k$ satisfied by the elements of a 
fundamental matrix of solutions $\hat{U}$ of the differential system. We can define an 
action of $G$ on the polynomials of $k[X_{ij}]$ by setting
$\sigma P(X_{ij})= P(X_{ij}M_{\sigma})$ where $M_{\sigma}$ is defined by $\sigma 
(\hat{U})=\hat{U}M_{\sigma}$ as in section \ref{subsec:actions}.

With this action, the polynomials $P_j$ are invariant (because the tensors $I_j$ 
are).
The Galois group $G$ is then defined as the set of automorphisms of $V$ that 
preserve the ideal $J$ (\cite{CoSi99a} or \cite{PuSi03a}) 
and this is equivalent to say that $G$ is defined by
	$$G=\{ \sigma \in GL(V) \; | \; \sigma(I_k)=I_k, \; 1\leq k \leq p\}.$$ 
We also deduce (as in section \ref{subsec:actions}) that the Lie algebra
$Lie(G)$ of $G$ is defined by 
	$$Lie(G)=\{ g \in \mathfrak{gl}(V) \; | \;  D_g(I_k)=0, \; 1\leq k \leq p \}.$$

\begin{theorem}\label{thm: CWA} 
Let $k$ denote a finitely generated differential extension of $\bar{\Q}$.
Let $[A]: Y'=AY$ with $A\in\mathrm{Mat}(n,{k})$ be a linear differential system. 
Let  $\mathcal{M}:=({k}^n \,,\, \nabla=\partial -A)$ be the associated differential module.
We assume that the differential Galois group $Gal([A])$ of $[A]$  is reductive and unimodular.  
\\
The system $[A]$ is in reduced form if and only if any invariant $\phi\in\mathrm{Sym}(n\mathcal{M})^{\nabla}$ of the module 
has constant coefficients  (in the sense of definitions \ref{invariant-module} and \ref{inv-a-coeffs-csts}).
\end{theorem}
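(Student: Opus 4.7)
The forward implication of Theorem \ref{thm: CWA} is free: it is proposition \ref{criterion-reductive} restricted to the particular tensor construction $\Sym(n\CM)$. Indeed, if $[A]$ is in reduced form, every invariant in every tensor construction has constant coefficients, and $\Sym(n\CM)$ is one such construction.

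For the converse, the plan is to exploit the specific feature of the reductive unimodular case discussed in the paragraphs preceding the theorem, namely Compoint's result (\cite{Co96a,CoSi99a}): when $G$ is reductive and unimodular, a system of generators $I_1,\dots,I_p$ of the invariant ring $\Sym(nV)^{G}$ already cuts out $G$ and its Lie algebra in the sense that
\[
    G=\{\sigma\in GL(V)\,:\,\sigma(I_k)=I_k,\ 1\le k\le p\},\qquad
    \mathfrak{g}=\{N\in\frgl(V)\,:\,D_N(I_k)=0,\ 1\le k\le p\}.
\]
The rest of the argument will then mirror the converse direction of proposition \ref{criterion-reductive}, but uses only invariants sitting inside $\Sym(nV)$ rather than a Chevalley semi-invariant in an arbitrary construction.

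Concretely, I would fix the fundamental matrix $\hat{U}$ with $\hat{U}(z_0)=\Id$ provided by section \ref{evaluation-morphism} and, for each generator $I_k$, let $v_k\in C^{N_k}$ be its coordinate vector in the basis of $\Sym(nV)$ induced by $\hat{U}$. By dictionary \ref{dico}, the associated rational solution of $Y'=\const(A)Y$ is $\phi_{I_k}=\Const(\hat{U})\,v_k$, with $\phi_{I_k}(z_0)=v_k$. The hypothesis says $\phi_{I_k}$ has constant coefficients, so evaluation at $z_0$ forces $\phi_{I_k}\equiv v_k$, and differentiating gives $\const(A)\,v_k=0$. Taking any Wei-Norman decomposition $A=\sum_i f_i A_i$, lemma \ref{crucial_for_criterion2} yields the Wei-Norman decomposition $\const(A)=\sum_i f_i\,\const(A_i)$; since each $\const(A_i)\,v_k$ lies in $C^{N_k}$ and the $f_i$ are $C$-linearly independent, one obtains $\const(A_i)\,v_k=0$, i.e.\ $D_{A_i}(I_k)=0$, for every $i$ and $k$. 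The Lie-algebra characterization above then forces $A_i\in\mathfrak{g}$ for every $i$, hence $A\in\mathfrak{g}(k)$ and $[A]$ is in reduced form.

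The essential new ingredient beyond the proof of proposition \ref{criterion-reductive} is Compoint's theorem, which ensures that $\Sym(nV)^G$ is already rich enough to define both $G$ and $\mathfrak{g}$; without unimodularity one has to use a Chevalley semi-invariant in a general tensor construction, as in theorem \ref{main-criterion}. The only point to verify carefully is the identification between ``$D_N(I_k)=0$'' as a derivation on polynomials in $C[X_{ij}]$ (definition \ref{definition-sym}) and ``$\const(N)\,v_k=0$'' as a matrix equality on coordinate vectors; this is exactly what the $\varepsilon$-formalism described after definition \ref{definition-sym} encodes, applied to the construction $\Sym(nV)$.
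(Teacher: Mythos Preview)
Your proof is correct and follows essentially the same route as the paper's own argument: forward implication by proposition~\ref{criterion-reductive}, converse by using the Compoint description of $\mathfrak{g}$ via generating invariants $I_1,\dots,I_p$ of $\Sym(nV)^G$, evaluating the constant invariants $\phi_{I_k}$ at $z_0$ to identify them with $v_k$, and then separating a Wei-Norman decomposition of $\const(A)$ against the $C$-linearly independent $f_i$ to conclude $A_i\in\mathfrak{g}$. Your closing remark on the $\varepsilon$-formalism identifying $D_N(I_k)=0$ with $\const(N)\,v_k=0$ is a useful clarification that the paper leaves implicit.
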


\begin{proof}
Assume that $[A]$ is in reduced form. By proposition~\ref{criterion-reductive}, all invariants have constant coefficients
so we only need to prove the converse implication.
Assume that, for any construction $\mathrm{Const}(\mathcal{M}) = \Sym^m(n\mathcal{M})$, any invariant 
$\phi\in\mathrm{Const}(\mathcal{M})^{\nabla}$ has constant coefficients (in the sense of definition~\ref{inv-a-coeffs-csts}).
Using section \ref{evaluation-morphism}, we choose an ordinary point $z_0$ and a fundamental solution matrix $\hat{U}$
verifying $\hat{U}(z_0)={\Id}$. 
The notation being as in the previous paragraph,  
 we take the columns of $\hat{U}$ as a basis of $V$ and express the coordinates $v_k$ of the generating invariants $I_k$ relatively to the induced basis in $\const_k(V):=\sym^{m_k}(nV)$.
 We thus have
$$
\mathfrak{g}=\{N\in\mathrm{Mat}(n,C)\; | \; \const_k(N)\cdot v_k = 0, \; 1\leq k\leq p \}.
$$
Let  $\mathcal{\phi}_k$ be the image of $I_k$ in $\Const_k(n\mathcal{M})^{\nabla}$ given by dictionary \ref{dico}, i.e
$\mathcal{\phi}_k=\Const_k(\hat{U})\cdot v_k$. As  $\mathcal{\phi}_k$ is constant by assumption
(and $\Const_k(\hat{U})(z_0)$ is the identity), the evaluation of this relation at $z_0$ gives $\mathcal{\phi}_k=v_k$. 

Now, as $\mathcal{\phi}_k'=\const_k(A)\cdot \mathcal{\phi}_k$ and 
$\const_k(A) = \sum^{r}_{i=1} f_i \cdot \const_k(A_i)$ is a Wei-Norman decomposition of $\const_k(A)$ 
(lemma \ref{crucial_for_criterion2}), we have $\sum^{r}_{i=1} f_i \; \const_k(A_i)\cdot v_k=0$. 
And, as the $f_i$ are linearly independent over $C$, it follows that $\const_k(A_i)\cdot v_k =0$ for all $i=1,\ldots ,r$ and for all $k=1,\ldots ,p$
which proves that $A_i\in\mathfrak{g}$ for $i=1,\ldots,r$ and the system is in reduced form.
 \end{proof}

 \begin{remark}
This proof sheds light on another useful feature (known to differential geometers, see e.g. \cite{DoNoFo85a}, Chap.6 \S 25 lemma p. 238). 
If the system is in reduced form, the above proof shows that  the canonical local solution matrix $\hat{U}$ satisfies  $v_k = \Const_k(\hat{U})v_k$ for $k=1,\ldots ,p$; as $G=\{g\in GL(V) \; | \;  g(I_k)=I_k, \; 1\leq k\leq p \}$  (see the discussion preceding the theorem), this means that the canonical local solution matrix $\hat{U}$
satisfies the equations of the group:  $\hat{U}\in G(K)$. 
\end{remark}

There remains a minor point in order to make this effective. Assume that we start from a matrix $A\in\mathrm{Mat}(n,{k})$. If we compute a reduction matrix $P$, it will have coefficients in some algebraic extension $k_0$ of $k$. So, to apply our theorem, we would need to compute solutions of 
the $\Sym^m(n\CM)\otimes k_0$ in $k_0^N$ and we would like to avoid that. This is provided by this simple but useful lemma.
\begin{lemma}\label{alg-to-rat}
Let $[A]:Y'=AY$ denote a linear differential system with $A\in\mathrm{Mat}(n,{k})$.  Assume that we know an algebraic extension $k_0$ of $k$ and a matrix 
$P\in GL_n(k_0)$ such that, for any invariant $\phi\in\Const(\CM)^\nabla$ (i.e. $\phi$ is a solution in $k^N$ of $\Const(\CM))$, $\Const(P)^{-1}\phi$ has constant coefficients.
Then, for any (algebraic) solution $\phi$ of any $(\Const(\CM) \otimes \bar{{k}})$, $\Const(P)^{-1}\phi$ also has constant coefficients. \\
In other words, $P[A]$ is in reduced form and $P$ is a reduction matrix.
\end{lemma}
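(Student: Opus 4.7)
The plan is to first show that $\tilde{A}:=P[A]$ is in reduced form (the second assertion), then invoke the direct implication of theorem~\ref{thm: CWA} applied to $\tilde{A}$ over $\bar{k}$ to derive the first (algebraic-invariants) assertion.

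For the reduced-form step, I would replay the converse direction of theorem~\ref{thm: CWA} for $\tilde{A}$. Following section~\ref{evaluation-morphism}, pick an ordinary point $z_0$ common to $A$ and $\tilde{A}$ at which $P$ is regular, and the canonical fundamental matrix $\hat{U}$ of $A$ with $\hat{U}(z_0)=\mathrm{Id}$. Let $I_1,\dots,I_p$ generate $\Sym(nV)^G$ with coordinate vectors $v_k$ relative to the basis induced by $\hat{U}$; the corresponding rational invariants of $\CM$ are $\phi_k=\Const_k(\hat{U})v_k\in k^{N_k}$, and the hypothesis yields $\Const_k(P)^{-1}\phi_k\in C^{N_k}$. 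Setting $\tilde{U}:=P^{-1}\hat{U}$ (a fundamental matrix of $\tilde{A}$) and $\tilde{U}_{\mathrm{can}}:=\tilde{U}\cdot P(z_0)$ (its canonical version at $z_0$), a direct manipulation gives $\Const_k(P)^{-1}\phi_k=\Const_k(\tilde{U}_{\mathrm{can}})\tilde{v}_k$, where $\tilde{v}_k:=\Const_k(P(z_0))^{-1}v_k$ is the coordinate vector of $I_k$ in the basis of $\tilde{V}$ given by $\tilde{U}_{\mathrm{can}}$. We are then exactly in the setting of the converse direction of theorem~\ref{thm: CWA} for $\tilde{A}$: its generating module invariants are constant. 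The Wei-Norman argument using lemmas~\ref{crucial_for_criterion1} and~\ref{crucial_for_criterion2} on $\tilde{A}=\sum_i g_i B_i$ then forces each $B_i$ to annihilate every $\tilde{v}_k$, hence to lie in $\tilde{\mathfrak{g}}:=P(z_0)^{-1}\mathfrak{g}P(z_0)$, the Lie algebra of the Galois group of $\tilde{A}$ in the canonical representation at $z_0$. Thus $\tilde{A}$ is in reduced form and $P$ is a reduction matrix.

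For the algebraic-invariants step, observe that the Galois group of $\tilde{A}$ over $\bar{k}$ is the identity component $G^0$, which inherits reductivity and unimodularity from $G$. Applying the direct implication of theorem~\ref{thm: CWA} to $\tilde{A}$ over $\bar{k}$, every $\bar{k}$-rational invariant of $\tilde{\CM}$ has constant coefficients. Any algebraic solution $\phi\in\bar{k}^N$ of $\phi'=\const(A)\phi$ corresponds via the gauge $\Const(P)\in GL_N(k_0)$ to $\tilde{\phi}:=\Const(P)^{-1}\phi\in\bar{k}^N$, which solves $\tilde{\phi}'=\const(\tilde{A})\tilde{\phi}$ and is therefore a $\bar{k}$-rational invariant of $\tilde{\CM}$; hence $\tilde{\phi}$ has constant coefficients.

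The main obstacle is keeping straight the change of basis induced by the constant matrix $P(z_0)\in GL_n(C)$: the canonical representations of the Galois group attached to $\hat{U}$ and to $\tilde{U}_{\mathrm{can}}$ are conjugate by $P(z_0)$, so their Lie algebras $\mathfrak{g}$ and $\tilde{\mathfrak{g}}$ are conjugate, and one must verify that the converse-of-theorem argument applied to $\tilde{A}$ produces matrices in $\tilde{\mathfrak{g}}$ --- the Lie algebra relevant for the reduced-form statement of $\tilde{A}$ --- rather than in $\mathfrak{g}$.
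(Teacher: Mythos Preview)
Your proposal is correct but takes a genuinely different route from the paper.

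The paper argues directly at the level of algebraic solutions: given an algebraic solution $\phi$ of some $\const(A)$, its $\mathrm{Gal}(\bar k/k)$-conjugates are again solutions, and the symmetric functions of these conjugates are \emph{rational} invariants living in symmetric-power constructions (this is the Singer--Ulmer observation \cite{SiUl93a,SiUl97a}). The hypothesis then sends these symmetric functions to constants, so every entry of $\Const(P)^{-1}\phi$ satisfies a monic polynomial with coefficients in $C$; since $C$ is algebraically closed, that entry lies in $C$. The reduced-form conclusion then follows from the first assertion.

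You invert the order: first establish that $\tilde A=P[A]$ is in reduced form by replaying the converse of Theorem~\ref{thm: CWA} (tracking the conjugation by $P(z_0)$ to pass between the canonical representations attached to $\hat U$ and to $\tilde U_{\mathrm{can}}$), and then deduce the algebraic-invariants statement as an immediate corollary of the direct implication applied to $\tilde A$ over $\bar k$. Both steps are sound; in particular your identification $\tilde{\mathfrak g}=P(z_0)^{-1}\mathfrak g\,P(z_0)$ with the Lie algebra of $\mathrm{Gal}([\tilde A])$ in the $\tilde U_{\mathrm{can}}$-representation is exactly what is needed, and the Wei--Norman argument goes through verbatim.

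What each approach buys: the paper's proof is short and needs no basis bookkeeping, but it outsources the key step to the Singer--Ulmer reference and leaves the phrase ``the image of $f$ under $P^{-1}$'' to be unpacked by the reader. Your argument is fully self-contained within the paper's own machinery and makes the logical dependency transparent (reduced form first, algebraic invariants as a consequence), at the modest cost of the $P(z_0)$-conjugation bookkeeping. One small remark: in your second step you should cite Proposition~\ref{criterion-reductive} rather than Theorem~\ref{thm: CWA}, since you need the statement for arbitrary tensor constructions, not just $\Sym(n\CM)$; and note that the direct implication there does not actually use reductivity, so your inheritance remark for $G^\circ$, while correct, is not needed.
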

\begin{proof} Take an algebraic solution $\phi$ of some construction $\Const$ (i.e. $\phi\in(\Const(\CM) \otimes \bar{{k}})^\nabla$) and choose an entry $f$ 
of this solution.
The coefficients of the 
minimal polynomial
of $f$ are given as entries in ${k}$ of a solution of another tensor construction: as shown e.g. by Singer and Ulmer \cite{SiUl93a,SiUl97a}, 
coefficients of minimal polynomials of algebraic solutions of linear differential equations are found as invariants in symmetric powers.
As $P^{-1}$ maps the latter to constants, the image of $f$ under $P^{-1}$ will have a minimal polynomial whose coefficients are constants. 
As $C$ is algebraically closed, this in turn implies that the image of $f$ is in $C$, hence the result.
\end{proof}

\subsection{The Compoint-Singer Procedure for Computing $G$ when $G$ is Reductive}
We now assume that ${k}$  is an algebraic extension of $C(x)$.
Theorem 1.1 of Compoint in  \cite{Co96a,Co98a} shows that if a Picard-Vessiot extension $K$ has a reductive unimodular Galois group $G$,
then the generators of the ideal of relations among solutions (and their derivatives) are obtained from the generators of 
$\mathrm{Sym}(n\mathcal{M})^{\nabla}$. From van Hoeij and Weil \cite{HoWe97a} we know that, for a given $m$, 
the generators of degree $m$ of $\mathrm{Sym}(n\mathcal{M})^{\nabla}$ may be constructed directly from system $[A]$.
So computing $G$ is reduced to computing a bound on the degree of the generators of the ring of invariants of $G$.
The work of  Compoint and Singer \cite{CoSi99a} shows how to compute such a bound and culminates in this result:

\begin{proposition}[Compoint, Singer \cite{CoSi99a}] 
Let ${k}$ denote an algebraic extension of $C(x)$, where $C$ is an algebraically closed computable field.
Let $[A]:Y'=AY$ be a linear differential system with $A\in \mathrm{Mat}(n,{k})$ and a reductive unimodular Galois group.
Then one can compute in a finite number of steps a basis $\mathcal{\phi}_1,\ldots ,\mathcal{\phi}_r\in {k}[Y_{i,j}]$ of
$\mathrm{Sym}(n\mathcal{M})^{\nabla}$. 
\end{proposition}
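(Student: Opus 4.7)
The plan is to assemble the result from three ingredients already present in the literature that the excerpt cites: Compoint's duality between module invariants and group invariants, the van Hoeij--Weil algorithm for computing rational solutions of constructions, and an a priori degree bound on generators of the invariant ring.

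First I would recall the dictionary (a refined version of Dictionary~\ref{dico}) that identifies $\Sym^m(n\mathcal{M})^{\nabla}$ with $\Sym^m(nV)^{G}$. Concretely, if $\hat{U}$ is the local fundamental solution matrix from Section~\ref{evaluation-morphism}, then $v \mapsto \Sym^m(n\hat{U})\cdot v$ bijects $\Sym^m(nV)^{G}$ with the $k$-space of rational solutions of the derived system $Y'=\mathfrak{sym}^m(nA)Y$. Since $G$ is reductive, the graded ring $\Sym(nV)^{G}$ is finitely generated, so it suffices, for finitely many degrees $m$, to compute bases of the rational solutions of each $\Sym^m(n\mathcal{M})^{\nabla}$.

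Next I would invoke the van Hoeij--Weil algorithm \cite{HoWe97a}: for any fixed $m$, one can build the matrix $\mathfrak{sym}^m(nA)$ explicitly from $A$ and then compute, in finitely many steps, a $k$-basis of its rational solutions. This gives an effective handle on $\Sym^m(n\mathcal{M})^{\nabla}$ degree by degree.

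The main obstacle is bounding which degrees $m$ are needed. This is precisely where the Compoint--Singer machinery intervenes: one has to produce, from $n$ alone (and independently of the unknown $G$), an explicit integer $B(n)$ such that the ring $\Sym(nV)^{G}$ is generated by invariants of degree at most $B(n)$ for every reductive unimodular subgroup $G \subset GL_n(C)$. Such a uniform bound is provided in \cite{CoSi99a} (building on Popov-style bounds on degrees of generators of rings of invariants). Given $B(n)$, the algorithm is then clear: compute $\mathfrak{sym}^m(nA)$ and a $k$-basis of its rational solutions for each $m = 1,\ldots,B(n)$, translate each rational solution into an element of $k[Y_{i,j}]$ via the coordinates of $\Sym^m(nV)$, and output the union as the desired family $\phi_1,\ldots,\phi_r$. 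Finiteness of each step, together with the a priori bound $B(n)$, guarantees that the whole procedure terminates.
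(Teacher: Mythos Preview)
Your proposal is correct and mirrors the paper's own treatment: the paper does not give a self-contained proof of this proposition but, in the paragraph immediately preceding it, sketches exactly the three ingredients you assemble---Compoint's identification of module invariants with group invariants, the van Hoeij--Weil algorithm \cite{HoWe97a} for computing rational solutions of a fixed $\sym^m(nA)$, and the Compoint--Singer degree bound \cite{CoSi99a} ensuring termination. Your write-up is a faithful expansion of that sketch; the only minor caveat is that the bound in \cite{CoSi99a} is obtained via an enumeration of candidate reductive groups rather than as a closed-form $B(n)$, but this does not affect the logic of your argument.
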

From an implementation viewpoint, this procedure is far from being efficient, but it gives a (yet somewhat theoretical) starting point
for our reduction algorithm below.

\subsection{A Reduction Procedure for Reductive Unimodular Differential Systems}\label{reduction-procedure}
We assume, as above, that ${k}\subset\overline{C(z)}$ and that $G:=Gal([A])$ is reductive and unimodular. 
Continuing with our notations, we consider $\hat{U}\in GL_n(\,C[[z - z_0 ]]))$ a fundamental solution matrix of $[A]$ satisfying the initial condition $\hat{U}(z_0)={\Id}$ at an ordinary point $z_0$ of $[A]$.
At this point, we have a simple procedure for computing a reduced form.
\\

\noindent {\bf Reduction procedure}
\begin{enumerate}
\item Choose a regular point $z_0$ in $C$ and a canonical local solution matrix $\hat{U}\in GL_n(\,C[[z - z_0 ]])$ satisfying $\hat{U}(z_0)={\Id}$.
\item Using the Compoint-Singer procedure, compute a set of invariants $ \mathcal{\phi}_1,\ldots,\mathcal{\phi}_r$ in  $\mathrm{Sym}(n\mathcal{M})^\nabla$
and the set of invariants  $I_i := \mathcal{\phi}_i(z_0)$ of $G$ in $\Sym^{m_i}(nV)$ such that $G=\{g\in GL(V)| \forall i, g(I_i)=I_i\}$.
\item Pick a matrix $P$ with indeterminate coefficients. Define a system $(S)$ of polynomial equations 
	$$(S):\; \forall i =1\ldots,r , \quad \Sym^{m_i}(P\oplus\cdots\oplus P)\cdot I_i - \mathcal{\phi}_i = 0 , \det(P)\neq 0.$$
\item Using any algorithm for polynomial system solving (Groebner bases, triangular sets), find a solution in $GL_n(\bar{{k}})$ of $(S)$. 
The resulting $P[A]$ is in reduced form.
\end{enumerate}

\begin{theorem}
The above algorithm is correct and computes a reduced form for any system with a reductive unimodular Galois group.
\end{theorem}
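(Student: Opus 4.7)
The plan has two parts: correctness (any $P \in GL_n(\bar k)$ satisfying $(S)$ is a reduction matrix) and solvability (such a $P$ always exists in $GL_n(\bar k)$). Step 4 of the procedure then reduces to classical polynomial system solving.

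For correctness, suppose $P$ satisfies $(S)$ and set $B := P[A]$. For each $i$, the gauge transformation by $P$ maps the rational solution $\phi_i$ of $\sym^{m_i}(n\CM_A)$ to the rational solution $\Sym^{m_i}(P\oplus\cdots\oplus P)^{-1}\phi_i$ of $\sym^{m_i}(n\CM_B)$; by $(S)$ this latter equals the constant vector $I_i$, so $0 = I_i' = \sym^{m_i}(B) I_i$. Now take a Wei-Norman decomposition $B = \sum_j g_j B_j$; by Lemma~\ref{crucial_for_criterion2}, $\sym^{m_i}(B) = \sum_j g_j \sym^{m_i}(B_j)$ is also a Wei-Norman decomposition, and $C$-linear independence of the $g_j$ forces $\sym^{m_i}(B_j) I_i = 0$ for every $i, j$. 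Combined with the description $\mathfrak g = \{N \in \mathrm{Mat}(n,C) : \sym^{m_i}(N) I_i = 0,\ \forall i\}$ recalled just before theorem~\ref{thm: CWA}, this yields $B_j \in \mathfrak g$ for every $j$, so $B \in \mathfrak g(\bar k)$ is in reduced form.

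For solvability, the corollary to proposition~\ref{prop: Kovacic-Kolchin} provides some $P^* \in GL_n(\bar k)$ with $B^* := P^*[A]$ in reduced form. Applying the ``reduced $\Rightarrow$ constant invariants'' direction of theorem~\ref{thm: CWA} to $B^*$ (over the finite algebraic extension of $k$ generated by the entries of $P^*$), each gauge-transform $\Sym^{m_i}(P^* \oplus \cdots \oplus P^*)^{-1}\phi_i$ is constant. Evaluating at $z_0$ --- where $\hat U(z_0) = \mathrm{Id}$, hence $\phi_i(z_0) = I_i$ --- identifies this constant as $\Sym^{m_i}(P^*(z_0)\oplus\cdots\oplus P^*(z_0))^{-1} I_i$. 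Setting $P := P^* \cdot P^*(z_0)^{-1}$, a direct computation then yields $\Sym^{m_i}(P\oplus\cdots\oplus P) I_i = \phi_i$, so $P$ solves $(S)$.

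The main subtle point is the existence argument, specifically the specialization $P^*(z_0)$: one must choose $z_0$ so that it is ordinary for $[A]$ (and for the relevant tensor constructions, cf.\ remark~\ref{apparent-singularities}) and is also a point where the algebraic entries of $P^*$ remain analytic. The set of such $z_0$ is co-finite, hence non-empty, and since $C$ is algebraically closed, $P^*(z_0) \in GL_n(C)$, so $P \in GL_n(\bar k)$. Once this is handled, $(S)$ is a finite polynomial system in the entries of $P$ over $k$ together with $\det(P) \neq 0$; its solvability in $GL_n(\bar k)$ is guaranteed by the existence just proved, so any standard algorithm --- Gröbner bases, regular chains, triangular decomposition --- will produce an explicit witness.
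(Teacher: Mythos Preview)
Your correctness argument (any solution $P$ of $(S)$ yields a reduced form) is essentially the paper's own reasoning: you unpack the converse direction of theorem~\ref{thm: CWA} directly rather than citing it together with lemma~\ref{alg-to-rat}, which is fine and arguably cleaner.

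Your solvability argument, however, takes a genuinely different route. The paper simply observes that the local fundamental matrix $\hat U$ itself satisfies $(S)$ --- this is immediate from dictionary~\ref{dico}, since by construction $\phi_i = \Const_i(\hat U)\cdot I_i$ --- so $(S)$ has a solution over $K$, and Hilbert's Nullstellensatz then delivers one over $\bar k$. This one-line argument works for \emph{every} ordinary $z_0$ and, crucially, does not invoke Kolchin--Kovacic: the paper's explicit aim (stated just after the theorem) is that the algorithm furnish an independent constructive proof of proposition~\ref{prop: Kovacic-Kolchin} in the reductive unimodular case. Your argument instead consumes Kolchin--Kovacic as a black box to obtain $P^*$ and then normalizes by $P^*(z_0)^{-1}$. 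This is logically valid, but (i) it runs counter to the paper's stated purpose of re-deriving Kolchin--Kovacic from the procedure, and (ii) it forces the auxiliary worry about whether the $z_0$ already fixed in Step~1 avoids the poles of $P^*$ and the zeros of $\det(P^*)$ --- constraints coming from an object not yet known at that stage. The paper's $\hat U$~+~Nullstellensatz approach sidesteps this entirely.
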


\begin{proof} Recall that an algorithm is correct when each step is well defined, it terminates in finite time, and the result is what was expected.\\
The fact that Step 1 makes sense follows from section \ref{evaluation-morphism}. That Step 2 is well defined follows from \cite{CoSi99a} (using  \cite{Co96a,Co98a} 
and \cite{HoWe97a}). Step 4 is well defined if we can guarantee that system $(S)$ is always consistent. But, by construction, the local fundamental solution matrix 
$\hat{U}$ satisfies $(S)$. 
As $(S)$ has this solution in $K^{n^2}$, by Hilbert's Nullstellensatz, it has a solution in ${\bar{{k}}}^{n^2}$ (which can be obtained by Groebner bases \cite{CoLiOS07a} 
or triangular sets (e.g. \cite{Hu03a})). Now our theorem  \ref{thm: CWA} and lemma \ref{alg-to-rat} 
show that the resulting $P[A]$ is in reduced form. 
Indeed, $P$ sends the $\phi_i$ to invariants $I_i$ with constant coefficients and, by assumption, any invariant is polynomial in the $\phi_i$ and is hence mapped to one with invariant coefficients.
\end{proof}

\begin{remark}
This algorithm computes a reduction matrix with coefficients in ${\bar{{k}}}$. 
However, when $G$ is connected, the Kolchin-Kovacic reduction theorem (proposition~\ref{prop: Kovacic-Kolchin})  shows that there should exist a solution 
$P$ with coefficients in ${k}$. This is where the $C_1$ condition on {k} enters. 
If the equations defining $P$ are quadrics over {k}, then there exists an algorithm to find a solution in {k} (\cite{CrHo06a}). In general, though the 
proof of Tsen's theorem can be made constructive, we do not know of an implemented algorithm to determine a rational point on an algebraic manifold on ${k}$ 
in full generality. So the above algorithm, though correct and complete, does not fulfill the Kovacic program completely in the sense that it does not yet address 
the rationality problem "find $P\in GL_n(k)$" in the case when $G$ is connected.
\end{remark}

Apart
from the rationality issue, this theorem gives us a constructive proof of the classical Kolchin-Kovacic reduction theorem (proposition 
\ref{prop: Kovacic-Kolchin}) for reductive unimodular groups. We know (e.g. from section 5 of \cite{MiSi02a}) that, if $A\in \hgot(k)$, then there exists 
a fundamental solution matrix in 
$H(K)$ so that $(S)$ will have a solution in $H(K)$  as in the above proof. 

To obtain a similar proof of the reduction theorem for general groups (dropping the $C_1$ assumption on the fields and the rationality issue), 
pick a Chevalley semi-invariant $\phi_I$ exponential solution of some $\const(A)$ as in the proof of theorem  \ref{main-criterion} 
and write $\phi_I=exp(\int\varphi)V$ with $exp(\int\varphi)(z_0)=1$ and $V\in{k}^N$; 
letting $I:=V(z_0)$, the same reasoning shows that, for a $P$ with unknown coefficients, the system $V=\Const(P).I$ has a solution $P$ in $GL(\bar{{k}})$ 
(because $\hat{U}$ satisfies these equations so they are consistent) and the proof of theorem  \ref{main-criterion} then shows that $P[A]$ is in reduced form. 

If the group is connected and we have a reduced form with coefficients in $\bar{{k}}$ then to find one with coefficients in ${{k}}$ is a descent question 
(see e.g.  \cite{CoPuWe10a,CoWe04a}).
\\

We now turn to the structure of the solutions of the polynomial system $(S)$ of the reduction procedure. We first note a simple lemma.
\begin{lemma} Let $k$ be as above and $A\in Mat(n,k)$.
Let $k_0$ be an algebraic extension of $k$ and $B\in Mat(n,k_0)$ such that the system $Y'=BY$ is a reduced form of $[A]$. 
\begin{enumerate}
\item
The differential Galois group of $[B]$ is connected.
\item 
If there is a reduction matrix $P\in GL_n(k)$ such that $B=P[A]$, then $k_0=k$ and $Gal([A])$ is connected.
\end{enumerate}
\end{lemma}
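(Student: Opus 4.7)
The plan is to use the very definition of reduced form together with the inclusion part (item 1) of Proposition~\ref{prop: Kovacic-Kolchin}, which holds without the $C_1$ assumption, and then conclude via the classical fact that a connected closed algebraic subgroup of $GL_n$ is determined by its Lie algebra.

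For part (1), I would proceed as follows. Denote by $\mathfrak{g}_B := Lie(Gal([B]))$ the Lie algebra of the differential Galois group of $[B]$ (computed over $k_0$). Since $[B]$ is in reduced form, the definition gives $B\in\mathfrak{g}_B(\overline{k})$. Let $H$ be the unique connected closed algebraic subgroup of $GL_n$ with $Lie(H)=\mathfrak{g}_B$. Applying the first part of Proposition~\ref{prop: Kovacic-Kolchin} to the system $[B]$ with this connected group $H$, whose Lie algebra contains $B$, yields
\[
Gal([B])\;\subset\; H.
\]
On the other hand, $Gal([B])^{\circ}$ is itself a connected closed algebraic subgroup of $GL_n$ with Lie algebra $\mathfrak{g}_B$; by uniqueness, $Gal([B])^{\circ}=H$. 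Combining both inclusions, $Gal([B])^{\circ}\subset Gal([B])\subset H=Gal([B])^{\circ}$, so $Gal([B])$ is connected.

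For part (2), I would observe that if $P\in GL_n(k)$ then $B=P[A]=P^{-1}(AP-P')$ lies in $\mathrm{Mat}(n,k)$, so $k_0$ can be taken equal to $k$. Moreover, if $\hat{U}$ is a fundamental solution matrix of $[A]$, then $P^{-1}\hat{U}$ is a fundamental solution matrix of $[B]$, and because $P^{-1}\in GL_n(k)$ the two matrices generate the same differential extension of $k$. Hence the Picard--Vessiot extensions coincide, $K_A=K_B$, and therefore $Gal([A])=Gal([B])$. Invoking part (1), this group is connected.

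The statement is essentially a structural consequence of the reduced form definition, so I expect no serious obstacle. The only mild point requiring care is making sure that in part (1) we apply Proposition~\ref{prop: Kovacic-Kolchin} in its inclusion form (which does not need the $C_1$ hypothesis, as noted in the remark following the proposition in the paper), with the base field being $k_0$ rather than $k$; all the constructions (Lie algebra of the Galois group, reduced form condition, Kolchin inclusion) are relative to this base field and the argument goes through unchanged.
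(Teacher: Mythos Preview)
Your proof is correct and follows the same argument as the paper: for (1), apply the Kolchin inclusion with $H=Gal([B])^\circ$ (your ``unique connected group with Lie algebra $\mathfrak{g}_B$'' is precisely this) to obtain $Gal([B])\subset Gal([B])^\circ$; for (2), note that a gauge transformation in $GL_n(k)$ preserves both the coefficient field and the Picard--Vessiot extension, so $Gal([A])=Gal([B])$. One small caveat: Proposition~\ref{prop: Kovacic-Kolchin} as stated in the paper assumes not only the $C_1$ condition but also that $G$ is connected---which is exactly what you are proving---so, like the paper, you are really invoking the underlying source result (Proposition~1.31 of \cite{PuSi03a}), where connectedness of $G$ is not needed for the inclusion $G\subset H$.
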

\begin{proof}
We assume that $k_0$ is the coefficient field of $B$, i.e. the smallest field containing all coefficients of $B$. Let $K$ be a Picard-Vessiot extension of $k_0$ associated to $[B]$ and $G=Gal(K/k_0)=Gal([B])$ be the differential Galois group with Lie algebra $\ggot$.
As $B\in\ggot(k_0)$, Kolchin's theorem (proposition 1.31 in \cite{PuSi03a}, take $H=G^\circ$) shows that $G\subset H:=G^\circ$, hence $G=G^\circ$ and $G$ is connected. 
If now $B=P[A]$ with $P\in GL_n(k)$, then the coefficient field of $B$ is inside $k$ and $Gal([B])=Gal([A])$ so (2) follows from (1).
\end{proof}
If $k$ is a $C_1$ field then this lemma and the Kovacic-Kolchin reduction theorem (proposition~\ref{prop: Kovacic-Kolchin}) show that $Gal([A])$ is connected 
if and only if there exists a reduction matrix $P$ in $GL_n(k)$.

\begin{proposition}
Consider the polynomial system $(S)$ of the reduction procedure. 
Let $k_0:=K^{G^\circ}$ denote the fixed field of the connected component $G^\circ$ of $G$ in $K$. 
\begin{enumerate}
\item If $k$ is a $C_1$ field, then there exists a solution to $(S)$ in $GL_n(k_0)$. Furthermore, $k_0$ is the smallest field over which a solution can be found.
\item Suppose we have a solution $P\in GL_n(\bar{{k}})$ of $(S)$.  A matrix $P\cdot M$ is another solution of $(S)$ if and only if $M\in G(\bar{{k}})$.
\item A matrix $P\in GL_n(\bar{{k}})$ is a solution of $(S)$ only if $P'\cdot P^{-1}=A + N$ with $N\in \mathfrak{g}(\bar{k})$.
\end{enumerate}
\end{proposition}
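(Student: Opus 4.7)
The plan is to treat the three parts separately: Part 1 will reduce to applying the Kolchin--Kovacic theorem over a well-chosen algebraic extension of $k$, Part 2 will follow from a direct computation using the characterization of $G$ via invariants, and Part 3 from differentiating the defining equations of $(S)$.

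For Part 1, the key observation is that $k_0 = K^{G^\circ}$ is the fixed field of the finite quotient $G/G^\circ$, so $k_0/k$ is a finite algebraic extension; consequently, if $k$ is $C_1$ then so is $k_0$. Over $k_0$, the Picard--Vessiot field is still $K$ but the Galois group becomes the connected group $G^\circ$, with the same Lie algebra $\mathfrak{g}$. Applying Proposition~\ref{prop: Kovacic-Kolchin} over $k_0$ yields a reduction matrix $P \in GL_n(k_0)$; by Theorem~\ref{thm: CWA} together with the correctness of the algorithm, $P$ satisfies $(S)$. For minimality: if $P \in GL_n(L)$ is any solution of $(S)$, then $P[A]$ has coefficients in $L$ and is in reduced form, so by the lemma immediately preceding this proposition, the Galois group of $[P[A]]$ (equivalently of $[A]$) over $L$ is connected; Galois correspondence then forces $L \cap K = k_0$, whence $L \supseteq k_0$.

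Part 2 follows from a direct computation: since $\Const$ is a group morphism, $\Const(PM) I_k = \Const(P)\Const(M) I_k$, which equals $\phi_k = \Const(P) I_k$ if and only if $\Const(M) I_k = I_k$ for every generating invariant $I_k$. By the characterization $G = \{g \in GL_n(C) : \Const(g) I_k = I_k \text{ for all } k\}$ from Section~\ref{invariants}, this is exactly the condition $M \in G(\bar{k})$.

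For Part 3, differentiate $\phi_k = \Const(P) I_k$. Since $I_k$ is constant and $\Const(P)' = \const(P'P^{-1}) \Const(P)$ (from $\Const$ being a group morphism), combining with the ODE $\phi_k' = \const(A) \phi_k$ yields $\const(N) \phi_k = 0$ for every $k$, where $N := P'P^{-1} - A \in \mathrm{Mat}(n,\bar{k})$. Using the equivariance identity $\const(N)\Const(\hat{U}) = \Const(\hat{U})\const(\hat{U}^{-1}N\hat{U})$ together with $\phi_k = \Const(\hat{U}) I_k$, this simplifies to $\const(\hat{U}^{-1}N\hat{U}) I_k = 0$ for all $k$, i.e.\ $\hat{U}^{-1}N\hat{U} \in \mathfrak{g}(K)$. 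To conclude $N \in \mathfrak{g}(\bar{k})$, evaluate at the base point $z_0$: since $\hat{U}(z_0) = \Id_n$, this directly gives $N(z_0) \in \mathfrak{g}$. Writing a Wei-Norman decomposition $N = \sum_j h_j N_j$ with $h_j \in \bar{k}$ linearly independent over $C$ and exploiting the $C$-linearity of membership in $\mathfrak{g}$ then promotes this pointwise condition to the $\bar{k}$-linear statement $N \in \mathfrak{g}(\bar{k})$. The main obstacle is exactly this last descent step, which requires reconciling the $K$-level statement $\hat{U}^{-1}N\hat{U} \in \mathfrak{g}(K)$ with $N \in \mathrm{Mat}(n,\bar{k})$, keeping in mind that $\hat{U}$ need not lie in the normalizer of $\mathfrak{g}$ inside $GL_n$.
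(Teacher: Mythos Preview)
Your arguments for Parts~1 and~2 are correct and coincide with the paper's approach; the paper is terser but the content is the same. (One nitpick in your minimality argument: the Galois correspondence only gives $L\cap K \supseteq k_0$, not equality, but your conclusion $L\supseteq k_0$ is unaffected.)

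For Part~3 you actually go \emph{further} than the paper. The paper's proof stops at the identity $\const_k(B-A)\,\phi_k=0$ (with $B=P'P^{-1}$) and simply remarks that this is a linear, non-differential system determining $B$; it does not carry out the passage to $N\in\mathfrak g(\bar k)$. You reproduce that computation and then try to upgrade it. The upgrade, however, does not work: evaluating $\hat U^{-1}N\hat U\in\mathfrak g(K)$ at $z_0$ yields only the single relation $N(z_0)\in\mathfrak g$, and one point cannot force each constant matrix $N_j$ in a Wei--Norman decomposition $N=\sum_j h_j N_j$ to lie in $\mathfrak g$ (the $h_j(z_0)$ give just one linear combination). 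More structurally, using $\phi_k=\Const_k(P)I_k$ instead of $\Const_k(\hat U)I_k$ in your equivariance identity shows that $\const_k(N)\phi_k=0$ for all $k$ is equivalent to $P^{-1}NP\in\mathfrak g(\bar k)$; and indeed $P^{-1}NP=-P[A]\in\mathfrak g(\bar k)$ by the correctness of the reduction procedure. So what the differentiation argument really yields is $N\in P\,\mathfrak g(\bar k)\,P^{-1}$, which coincides with $\mathfrak g(\bar k)$ only when $P$ normalises $\mathfrak g$. The obstacle you flag is therefore genuine, and the paper's own proof leaves it open as well: the linear system it exhibits cuts out the conjugate $P\mathfrak g P^{-1}$ rather than $\mathfrak g$ itself.
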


\begin{proof}
View the differential system as having coefficients in $k_0$. Then its Galois group is $G^\circ$ and is connected. 
So, by proposition~\ref{prop: Kovacic-Kolchin}  of Kolchin and Kovacic, there exists a reduction matrix in $GL_n(k_0)$, hence Part 1. The fact that $k_0$ is minimal follows from the previous lemma.
\\
For part 2, we have $\Const(PM).I=\phi=\Const(P).I$ so, as $\Const$ is a group morphism, $\Const(M).I=I$ for all invariants defining $G$. This proves that 
	$M\in G(\bar{{k}})$. Part 3 is the derivative of this relation. Let $B:=P'\cdot P^{-1}$ 
We have $\phi'=\const(A)\phi$ and $$\phi'=\Const(P)' I = \const(B)\Const(P) I= \const(B) \phi$$ so, as the constructor $\const$ is linear, $\const(B-A)\phi=0$ and $B$ is given as the solution of a linear (non-differential) system.
\end{proof}

Note that even the reduction matrix $P$ has coefficients in a big algebraic extension, the reduced form $P[A]$ may still be defined over a small field.
For example, consider a system whose solutions are all algebraic. Then $P$ is just a fundamental solution matrix (algebraic) and $P[A]$ is the zero matrix (which has coefficients in a rather smaller field).

\section{Examples}
\subsection{A Dihedral Case in $GL_2(C)$}
$$A= \left( \begin {array}{cc} 0&1\\ \noalign{\medskip}x& \frac{1}{2x}  
\end {array} \right).$$
A variant of Kovacic's algorithm shows that the Galois group is a central extension of $D_\infty$ the infinite dihedral group.
Indeed, it has an invariant in $\Sym^2(\Lambda^2(\CM))$ (with value $x$) and and invariant in $\Sym^2(\CM)$ with value
${}^t(-1,0,x)$. If we choose the evaluation point $z_0=1$, the corresponding invariants are $I_1=1$ and $I_2=(-1,0,1)$.
Following our procedure, the reduction matrix $P$ should satisfy
$$  \left\{ x- \left( p_{1,1}\,p_{2,2}-p_{1,2}\,p_{2,1} \right)^{2},
2\,p_{1,1}\,p_{2,1}-2\,p_{1,2}\,p_{2,2},-1+{p_{1,1}}^{2}
-{p_{1,2}}^{2},x+{p_{2,1}}^{2}-{p_{2,2}}^{2} \right\}  $$
A triangular set computation gives simpler equations 
	$(p_{1,1},1+{p_{1,2}}^{2},x+{p_{2,1}}^{2},p_{2,2})$ 
hence the reduction matrix
$$P:=  \left[ \begin {array}{cc} 0&i\\ \noalign{\medskip}i\sqrt {x}& 0
\end {array} \right] 
$$
and 
$$P[A]=\sqrt {x} \left[ \begin {array}{cc} 0& 1\\ \noalign{\medskip}1& 0
\end {array} \right] $$
If we want to recover the more standard diagonal representation, we conjugate again by 
$$P_2:= \left[ \begin {array}{cc} 1& -1\\ \noalign{\medskip}1& 1\end {array}
 \right] $$
 which turns our invariant $[-1,0,1]$ into $(0,1,0)$ and 
 $$(P.P_2)[A]= \sqrt {x} \left( \begin {array}{cc} 1 &  0\\ \noalign{\medskip} 0 &  -1 \end {array} \right)$$
which is obviously a reduced form. Note that, in this example, it is not possible to obtain a reduced form without extending the ground field.

\subsection{Example: the Reduction Method when  $Lie([A])=\mathfrak{so}(3)$}
The special orthogonal group $\mathrm{SO}(3)$ is defined as
$$
\mathrm{SO}(3):=\{\, g\in{GL}_{3}(C)\,:\, g\in\mathrm{stab}(Y^2_1 + Y^2_2 + Y^2_3)\quad et \quad \mathrm{det}(g)=1\}.
$$
We consider $A\in\mathrm{Mat}(3,{k})$ and show how to put $[A]:Y'=AY$  
in reduced form with our procedure when its Galois group is $SO(3)$.
Note that as $SO(3)$ is conjugate to a symmetric square of $SL(2)$, this is an instance of the old problem of solving linear differential
equations in terms of equations of lower order (see \cite{Si85a}, \cite{Pe02a}\cite{Ho07a,HoPu06a}, \cite{Ng09a,NgPu10a}).

Testing whether $G\subset \mathrm{SO_{3}}(C)$ or not is achieved in three steps (see \cite{SiUl93a,HoRaUlWe99a}):

\begin{enumerate}
\item To check whether $G\subset\mathrm{SL}(3\,,\, C):=\{\,g\in{GL}(C,3)\,:\, \mathrm{det}(g)=1\}$, check whether there exists
$w\in {k}$ such that 
$w' = \mathrm{Tr}(A)w$. If yes, apply to $A$ the gauge transformation:
$$
P:=\left[\begin{array}{ccc}\omega &  0 &  0 \\ 0 &  1 &  0 \\ 0 &  0 &  1\end{array}\right]
$$  
so that $Tr(P[A])=0$, i.e. $P\in\mathfrak{sl}(3)$. Remark that if $U$ is a fundamental solution matrix of $[A]$, then 
$V:=P^{-1}U$ will be a solution matrix of $Z'=P[A]Z$ such that $\mathrm{det}(V)$ is constant, in the spirit of our criterion.
\item Using any factorization algorithm, check whether $[A]$ is irreducible (if it is reducible, use \cite{ApWe12a}).
\item Find an invariant of $[A]$ of degree $2$, i.e. test whether $[\sym^{2}(A)]$ has rational solutions. If it does not, then 
$\mathfrak{g}    \not\subset \mathfrak{so}(3)$. 
Otherwise, we have a solution
$Y={}^t (f_1 , f_2 , f_3 , f_4 , f_5 , f_6 )\in{k}^6$ of $Y'=\sym^{2}(A)Y$. 
\end{enumerate} 
Now, to achieve reduction, we find a gauge transformation $Q$ which transforms $Y$ to the form ${}^t (1,0,0,1,0,1)$.  
Indeed, if we let $\hat{U}$ be a solution matrix satisfying $\hat{U}(z_0) = {\Id}_n$ then $Y=\mathrm{Sym}^{2}(\hat{U})\cdot I$ 
and, by Gauss reduction of quadratic forms, $I$ is conjugate to $(1,0,0,1,0,1)$ i.e. $X^2_1 + X^2_2 + X^2_3$.

We apply this to the system given by the matrix
$$
A=\left[\begin{array}{ccc}\frac{2x^2 - 2x +1}{x(-1+x^2)} &  \frac{5x - 3x^3 + 2x^5 - 1+ x^2 - x^4}{(x-1)x}&  -\frac{2x^4 - 3x^3 + x +2}{x^2 (x-1)^2}\\
-\frac{x(2x-1)}{(x+1)(-1+x^2)} &  -\frac{x^5 - x^4 - x^3 + x^2 + 4x -1}{-1+x^2}&  \frac{x^4 - 2x^3 + 2x^2 + 1}{x(x+1)(x-1)^2 }\\
-\frac{x^2 (x-1)}{x+1}&  -x(x-1)(1-x^2 + x^4) &  \frac{x^5 - 2x^4 + x^3 + 2x - 1}{x(x-1)}\end{array}\right]
$$
We check that $[A]$ is irreducible (e.g. with algorithms in {\sc Maple}). The system admits an invariant of degree $2$, i.e. a rational solution of $[\sym^2(A)]$, namely
$
I_2:=-\frac{(3-x^2+x^4 - 2x)}{x^2 (x-1)^2}X^2_1 + 2\frac{(x^2 - 2x + 2)}{x(x+1)(x-1)^2} X_1 X_2 + (2x-2) X_1 X_3 - \frac{(x^2 - 2x +2)}{(x+1)^2 (x-1)^2} X^2_2 - 2\frac{x(x-1)}{x+1}X_2 X_3 - x^2 (x-1)^2 X^2_3.
$ (which can be found using \cite{HoWe97a} or the Barkatou algorithm\footnote{A {\sc Maple} implementation is available at
\url{http://perso.ensil.unilim.fr/~cluzeau/PDS.html}, see \cite{BaClElWe12a}.} applied to $\sym^2(A)$). Using standard Gauss reduction of quadratic forms, the invariant $I_2$ becomes:
$$
(-3+x^2 - x^4 + 2x)Z^2_1 - \frac{(x^2 - 2x +2)}{3-x^2 + x^4 -2x}Z^2_2 - \frac{1}{x^2 - 2x + 2}Z^2_3.
$$
We now apply an algorithm for polynomial solutions of quadric equations (e.g. \cite{CrHo06a}) which yields the polynomial change of variables  
$Z=P_2 \cdot Y$ defined by
$$
P_2 := \left[\begin{array}{ccc}(x+1)(x-1) &  x^2 -2x+2 &  0 \\ -1 &  (x+1)(x-1) &  1-x\\ 1-x &  (x+1)(x-1)^2 &  1\end{array}\right].
$$
So, finally, we combine this into transformation $(X_1 , X_2 , X_3) = (Y_1 , Y_2 , Y_3).P$ with
$$
P:=\left[\begin{array}{ccc}\frac{x(x-1)}{x^2 -1} &  1 &  0 \\ 0 &  x^2 -1 &  -\frac{1}{x}\\ 0 &  0 &  \frac{1}{x(x-1)}\end{array}\right] \quad \text{ and we obtain }\quad P[A] =\left[\begin{array}{ccc} 0 &  x &  1\\  -x &  0 &  x^2 \\ -1 &  - x^2 &  0\end{array}\right]\in\mathfrak{so}(3)({k}).
$$

%
\bibliographystyle{amsalpha}
\providecommand{\bysame}{\leavevmode\hbox to3em{\hrulefill}\thinspace}
\providecommand{\MR}{\relax\ifhmode\unskip\space\fi MR }
\providecommand{\MRhref}[2]{%
  \href{http://www.ams.org/mathscinet-getitem?mr=#1}{#2}
}
\providecommand{\href}[2]{#2}

\end{document}